\documentclass[11pt, one side, article]{memoir}

\settrims{0pt}{0pt} 
\settypeblocksize{*}{36.1pc}{*} 
\setlrmargins{*}{*}{1} 
\setulmarginsandblock{.98in}{.98in}{*} 
\setheadfoot{\onelineskip}{2\onelineskip} 
\setheaderspaces{*}{1.5\onelineskip}{*} 
\checkandfixthelayout

\usepackage{amsthm}
\usepackage{mathtools}

\usepackage[inline]{enumitem}
\usepackage{ifthen}
\usepackage[utf8]{inputenc} 
\usepackage{xcolor}

\usepackage[backend=biber, backref=true, maxbibnames = 10, style = alphabetic]{biblatex}
\usepackage[bookmarks=true, colorlinks=true, linkcolor=blue!50!black,
citecolor=orange!50!black, urlcolor=orange!50!black, pdfencoding=unicode]{hyperref}
\usepackage[capitalize]{cleveref}

\usepackage{tikz}

\usepackage{amssymb}
\usepackage{newpxtext}
\usepackage[varg,bigdelims]{newpxmath}
\usepackage{mathrsfs}
\usepackage{dutchcal}
\usepackage{fontawesome}
\usepackage{ebproof}
\usepackage{stmaryrd}
\usepackage{float}

\usepackage{listings}
\lstset{
  basicstyle=\ttfamily,
  basewidth=0.5em,
  keywordstyle=\fontseries{b}\selectfont
}

\lstdefinelanguage{HaskellMin}%
  {otherkeywords={=>},%
   morekeywords={if,then,else,case,class,data,deriving,hiding,in,infix,infixl,infixr,import,instance,let,module,newtype,of,qualified,type,where,do,as,family,default,forall,foreign,proc,rec},%
   sensitive,
   morecomment=[l]--,%
   morecomment=[n]{\{-}{-\}},%
   morestring=[b]"%
 }[keywords,comments,strings]

  \crefformat{enumi}{\card#2#1#3}
  \crefalias{chapter}{section}

  \addbibresource{Library20230503.bib}

  \hypersetup{final}

  \setlist{nosep}
  \setlistdepth{6}


  \usetikzlibrary{ 
  	cd,
  	math,
  	decorations.markings,
		decorations.pathreplacing,
  	positioning,
  	arrows.meta,
  	shapes,
  }
  
  \tikzset{
biml/.tip={Glyph[glyph math command=triangleleft, glyph length=.95ex]},
bimr/.tip={Glyph[glyph math command=triangleright, glyph length=.95ex]},
}

\tikzset{
	tick/.style={postaction={
  	decorate,
    decoration={markings, mark=at position 0.5 with
    	{\draw[-] (0,.4ex) -- (0,-.4ex);}}}
  }
} 
\tikzset{
	slash/.style={postaction={
  	decorate,
    decoration={markings, mark=at position 0.5 with
    	{\node[font=\footnotesize] {\rotatebox{90}{$\sim$}};}}}
  }
}

\newcommand{\bifrom}[1][]{
	\begin{tikzcd}[ampersand replacement=\&, cramped]\ar[r, bimr-biml, "{#1}"]\&~\end{tikzcd}  
}
\newcommand{\bifromlong}[2]{
	\begin{tikzcd}[ampersand replacement=\&, column sep=#1, cramped]\ar[r, bimr-biml, "#2"]\&~\end{tikzcd}  
}

\newcommand{\adj}[5][30pt]{
\begin{tikzcd}[ampersand replacement=\&, column sep=#1]
  #2\ar[r, shift left=4.5pt, "#3"]
  \ar[r, phantom, "\scriptstyle\Rightarrow"]\&
  #5\ar[l, shift left=4.5pt, "#4"]
\end{tikzcd}
}

\theoremstyle{definition}
\newtheorem{definitionx}{Definition}[chapter]

\theoremstyle{plain}

\newtheorem{theorem}[definitionx]{Theorem}
\newtheorem{proposition}[definitionx]{Proposition}
\newtheorem{corollary}[definitionx]{Corollary}
\newtheorem{lemma}[definitionx]{Lemma}

\newtheorem*{theorem*}{Theorem}
\newtheorem*{proposition*}{Proposition}
\newtheorem*{corollary*}{Corollary}
\newtheorem*{lemma*}{Lemma}
\newtheorem*{warning*}{Warning}

\newenvironment{example}
  {\pushQED{\qed}\examplex}
  {\popQED\endexamplex}

  \newenvironment{definition}
  {\pushQED{\qed}\definitionx}
  {\popQED\enddefinitionx}

	
\DeclareSymbolFont{stmry}{U}{stmry}{m}{n}
\DeclareMathSymbol\fatsemi\mathop{stmry}{"23}

\DeclareFontFamily{U}{mathx}{\hyphenchar\font45}
\DeclareFontShape{U}{mathx}{m}{n}{
      <5> <6> <7> <8> <9> <10>
      <10.95> <12> <14.4> <17.28> <20.74> <24.88>
      mathx10
      }{}
\DeclareSymbolFont{mathx}{U}{mathx}{m}{n}
\DeclareFontSubstitution{U}{mathx}{m}{n}
\DeclareMathAccent{\widecheck}{0}{mathx}{"71}



\DeclareMathOperator{\Hom}{Hom}

\DeclareMathOperator*{\colim}{colim}

\DeclareMathOperator{\ob}{Ob}

\newcommand{\cat}[1]{\mathcal{#1}}
\newcommand{\Cat}[1]{\mathbf{#1}}

\newcommand{\id}{\mathrm{id}}
\newcommand{\then}{\mathbin{\fatsemi}}

\newcommand{\tto}{\rightrightarrows}
\newcommand{\To}[2][]{\xrightarrow[#1]{#2}}

\newcommand{\Tto}[3][13pt]{\begin{tikzcd}[sep=#1, cramped, ampersand replacement=\&, text height=1ex, text depth=.3ex]\ar[r, shift left=2pt, "#2"]\ar[r, shift right=2pt, "#3"']\&{}\end{tikzcd}}

\newcommand{\from}{\leftarrow}

\newcommand{\From}[1]{\xleftarrow{#1}}

\newcommand{\card}{\,^{\#}}

\newcommand{\ldag}{^{\rotatebox{180}{$\dagger$}}}

\newcommand{\op}{^\tn{op}}

\newcommand{\tn}[1]{\textnormal{#1}}

\newcommand{\wt}[1]{\widetilde{#1}}

\newcommand{\nn}{\mathbb{N}}

\newcommand{\rr}{\mathbb{R}}

\newcommand{\smset}{\Cat{Set}}

\newcommand{\smcat}{\Cat{Cat}}

\newcommand{\catsharp}{\Cat{Cat}^{\sharp}}

\newcommand{\ccatsharp}{\mathbb{C}\Cat{at}^{\sharp}}

\newcommand{\sspan}{\mathbb{S}\Cat{pan}}

\newcommand{\set}{\tn{-}\Cat{Set}}

\newcommand{\coh}[1]{^{(#1)}}

\newcommand{\yon}{{\mathcal{y}}}
\newcommand{\poly}{\Cat{Poly}}

\newcommand{\cart}{\Cat{cart}}

\newcommand{\ppoly}{\mathbb{P}\Cat{oly}}

\newcommand{\tri}{\mathbin{\triangleleft}}
\newcommand{\ot}[2]{\,\mbox{${}_{#1}\otimes_{#2}$}\,}
\newcommand{\ih}[4]{{}_{#1}[#2,#3]_{#4}}

\newcommand{\qqand}{\qquad\text{and}\qquad}

\newcommand{\coto}{\nrightarrow}

\newcommand{\coalg}{\tn{-}\Cat{Coalg}}

\newcommand{\org}{{\mathbb{O}\Cat{rg}}}

\newcommand{\E}{{\cat{E}}}

\newcommand{\cofree}[1]{\mathfrak{c}_{#1}}
\newcommand{\eff}{\mathbb{E}\Cat{ff}}
\newcommand{\effel}{\mathbb{E}\Cat{ff}^{\,\tn{el}}}
\newcommand{\effellin}{\mathbb{E}\Cat{ff}^{\,\tn{el}}_{\tn{lin}}}

\renewcommand{\path}{\textit{path}}

\newcommand{\edge}{\mathrm{E}}
\newcommand{\vertex}{\mathrm{V}}

\newcommand{\biglens}[2]{
     \begin{bmatrix}{\vphantom{f_f^f}#2} \\ {\vphantom{f_f^f}#1} \end{bmatrix}
}
\newcommand{\littlelens}[2]{
     \begin{bsmallmatrix}{\vphantom{f}#2} \\ {\vphantom{f}#1} \end{bsmallmatrix}
}
\newcommand{\lens}[2]{
  \relax\if@display
     \biglens{#1}{#2}
  \else
     \littlelens{#1}{#2}
  \fi
}

\newcommand{\comod}{\mathbb{C}\Cat{omod}}

\newcommand{\minus}{\mathbin{\tn{-}}}


\linespread{1.1}
\allowdisplaybreaks
\setsecnumdepth{section}
\settocdepth{section}
\setlength{\parindent}{15pt}

\settocdepth{chapter}

\begin{document}

\title{All Concepts are $\ccatsharp$}

\author{Owen Lynch \and Brandon T. Shapiro \and David I. Spivak}

\date{\vspace{-.2in}}

\maketitle

\begin{abstract}
We show that the double category $\ccatsharp$ of comonoids in the category of polynomial functors (previously shown by Ahman-Uustalu and Garner to be equivalent to the double category of small categories, cofunctors, and prafunctors) contains several formal settings for basic category theory and has subcategories equivalent to both the double category $\org$ of dynamic rewiring systems and the double category $\ppoly_\E$ of generalized polynomials in a finite limit category $\E$. Also serving as a natural setting for categorical database theory and generalized higher category theory, $\ccatsharp$ at once hosts models of a wide range of concepts from the theory and applications of polynomial functors and category theory.
\end{abstract}

\renewcommand\cftbeforechapterskip{2pt plus 1pt}

\begin{KeepFromToc}
\tableofcontents
\end{KeepFromToc}

\chapter{Introduction}

Mac Lane famously declared that ``The notion of Kan extensions subsumes all the other fundamental concepts of category theory''  referring to the fact that limits, colimits, adjunctions, and the Yoneda lemma can all be defined in terms of Kan extensions, and titled that section ``All concepts are Kan extensions".

In the theory of polynomial functors, particularly as it has been explored by the authors, the main avenues of development have been the generalization from polynomials in the category $\smset$ to polynomials in other categories \cite{kock2012polynomial,weber2015polynomials,shapiro2023structures} and applications to categorical database theory \cite{spivak2012functorial,spivak2021functorial}, open dynamical systems \cite{spivak2021learners,shapiro2022dynamic}, and algebraic higher category theory \cite{weber2007familial,weber2015operads,shapiro2022thesis}. Recent results of Ahman-Uustalu \cite{ahman2016directed,ahman2017taking} and Garner show that comonoids in the monoidal category $\poly$ of polynomial endofunctors on $\smset$, coincide with the usual notion of categories, comonoid homomorphisms correspond to cofunctors, and bicomodules between comonoids correspond to parametric right adjoint functors between their associated copresheaf categories (also called \emph{prafunctors}). In \cite{spivak2021functorial}, the author assembled these components into a double category $\ccatsharp$ and showed it to be a natural setting for categorical database theory. The author and Brown in \cite{brown2023dynamic} use $\ccatsharp$ as a formal semantics for rewriting protocols, and provide a graphical language for a fragment of it. In \cite{shapiro2024polynomial}, the authors describe how algebraic categorical structures can be modeled in $\ccatsharp$ and show that Weber's nerve of any type of algebraic higher category arises from a universal categorical construction in $\ccatsharp$. The goal of the present work is to demonstrate that $\ccatsharp$ in fact subsumes the other fundamental concepts of polynomial functor theory as well, and begin to describe how basic category theory finds a home (or many) in $\ccatsharp$.

While the objects of $\ccatsharp$ are categories and the vertical and horizontal morphisms (cofunctors and prafunctors) are fundamental to the study of their copresheaf categories, functors between the categories themselves are not explicitly present in the data of $\ccatsharp$, which would seem to limit the usefulness of this setting for modeling basic category theory. Several remedies have been proposed, including by upgrading $\ccatsharp$ to include higher dimensional data \cite[Example 5.13]{shapiro2023structures} or finding functors in alternative places in $\ccatsharp$.%
\footnote{See for instance \href{https://youtu.be/aTZyHHAwk2E?t=7364}{Todd Trimble's talk} at the 2021 Workshop on Polynomial Functors.}
We take the latter approach by considering both monads in the bicategory of spans and algebras for a certain monad on the category of graphs as notions of categories whose morphisms are functors. We show that they can be both modeled in $\ccatsharp$ and recovered from regarding categories as objects in $\ccatsharp$. We also show that opposites of categories can be recovered using adjoint and monoidal dualization operations in $\ccatsharp$.

In \cite{shapiro2023structures}, the authors establish the category $\poly_\E$ of polynomials in a finite limit category $\E$ and show that a wide range of structures and results previously known for polynomials in $\smset$ generalize to this setting. Much like $\poly$, $\poly_\E$ is a duoidal category under composition and a generalization of the Dirichlet tensor product, and comonoids in $\poly_\E$ are precisely the categories internal to $\E$ whose source morphism is exponentiable. \cref{polyEembed} shows that $\poly_\E$ has a faithful embedding into $\ccatsharp$, so that in order to study polynomials in any category $\E$ one need only consider structures based on polynomials in $\smset$. 


In \cite{spivak2021learners,shapiro2022dynamic}, the authors explore how coalgebras for polynomial functors and algebraic structures built from such coalgebras provide a wide-reaching language for modeling dynamical systems which respond to external feedback, and construct the double category $\org$ as a convenient setting for the study of such ``open'' dynamics which includes examples from machine learning and economics. Separately in \cite{Lynch:blog}, the author introduces ``effects handlers,'' a mathematical object defined in terms of polynomials which models a way of incorporating side effects into the functional programming paradigm. In \cref{thm.eff}, we show that effects handlers form the horizontal morphisms of a sub-double category of $\ccatsharp$, and in \cref{thm.org} we show that $\org$ is a sub-double category of effects handlers, so that both effects handlers and coalgebras can be reasoned about in the language of $\ccatsharp$.

Of course, not all concepts in category theory are Kan extensions (for example, categories are not Kan extensions), and neither is every concept in category theory found in $\ccatsharp$. However, in both cases the exaggeration is worthwhile because the overwhelming ubiquity and power of the formalism makes it worthy of deep study. The position and function of $\ccatsharp$ within category theory is akin to the position and function of category theory within mathematics. In both cases, having a single unified and concise formalism---one which covers a broad swath of the larger subject and which has controlled notation and terminology, as well as a praxis of useful thought patterns---facilitates practitioners in finding interesting connections between different fields within the larger subject and concisely communicating their findings to others. Moreover, since $\ccatsharp$ is in some sense the language of data migration \cite{schultz2017algebraic}, everything in this paper can be implemented on a computer in a unified way.

\section*{Notation}

The symbol $\sum$ denotes an indexed coproduct, the symbol $+$ denotes binary coproduct, and $0$ denotes an initial object. For a morphisms $f \colon A \to B$ and $C \to B$ in a category, we will sometimes write $f^\ast C$ for the pullback $A \times_B C$.

\section*{Acknowledgments}

This material is based upon work supported by the Air Force Office of Scientific Research under award number FA9550-20-1-0348. We also appreciate the comments of our ACT2023 conference reviewers, in particular Reviewer uG3q.

\chapter{The Double Category $\ccatsharp$}

We begin by recalling the definition of the double category $\ccatsharp$ and the foundational results that make it so broadly applicable.

\section{The category of polynomials}

\begin{definition}
A \emph{polynomial} $p$ consists of a set $p(1)$ along with, for each element $I \in p(1)$, a set $p[I]$. We write
\[
p = \sum_{I \in p(1)} \yon^{p[I]}
\]
for such a polynomial, which is also the form of the associated \emph{polynomial functor} $\smset \to \smset$. A morphism $\phi$ of polynomials $p \to q$ is a natural transformation. It can be cast set-theoretically as consisting of a function $\phi_1 \colon p(1) \to q(1)$ along with, for each $I \in p(1)$, a function $\phi^\#_I \colon p[I] \from q[\phi_1I]$. We write $\poly$ for the category of polynomials.
\end{definition}

Elements of the set $p(1)$ are called \emph{positions} of a polynomial $p$, and for each $I\in p(1)$, elements of $p[I]$ are called \emph{directions} of $p$. The set of all directions of $p$, or the disjoint union of all the sets $p[I]$, is denoted $p_\ast(1)$ and has a canonical function to $p(1)$. If each $p[I]\cong 1$ is singleton, we say that $p$ is \emph{linear}. A morphism $\phi$ is called \emph{cartesian} if each $\phi^\#_I$ is a bijection, and \emph{vertical} if $\phi_1$ is a bijection.

\begin{definition}[{\cite[Proposition 2.1.7]{spivak2021functorial}}]
We denote by $\yon$ the polynomial with a single position and a single direction. For polynomials $p,q$, their \emph{composition} is the polynomial
\[
p \tri q \coloneq \sum_{\substack{I \in p(1) \\ J \colon p[I] \to q(1)}} \yon^{\sum\limits_{i \in p[I]} q[Ji]}.\qedhere 
\]
\end{definition}

There is a monoidal structure on the category $\poly$ given by $(\yon,\tri)$, and there are three additional monoidal structures given by
\begin{itemize}
	\item $(0,+)$, where $0$ is the polynomial with no positions, $(p + q)(1) \coloneq p(1) + q(1)$, $(p + q)[I] \coloneq p[I]$ for $I \in p(1)$, and $(p + q)[J] \coloneq q[J]$ for $J \in q(1)$;
	\item $(1,\times)$, where $1$ is the polynomial with one position and no directions, $(p \times q)(1) \coloneq p(1) \times q(1)$, and $(p \times q)[I,J] \coloneq p[I] + q[J]$; and
	\item $(\yon,\otimes)$, where $(p \otimes q)(1) \coloneq p(1) \times q(1)$ and $(p \otimes q)[I,J] \coloneq p[I] \times q[J]$.
\end{itemize}

\section{Comonoids and bicomodules}

\begin{definition}
A \emph{comonoid} in $\poly$ is a polynomial $c$ equipped with morphisms $\epsilon \colon c \to \yon$ (the counit) and $\delta \colon c \to c \tri c$ (the comultiplication) satisfying unit and associativity equations. A comonoid homomorphism is a morphism of polynomials $c \to c'$ which commutes with counits and comultiplications.
\end{definition}

\begin{definition}
For comonoids $c,d$ in $\poly$, a $(c,d)$-bicomodule is a polynomial $p$, called the \emph{carrier}, equipped with morphisms
\[
c \tri p \From{\lambda} p \To{\rho} p \tri d
\]
which commute with each other as well as the counits and comultiplications of $c$ and $d$, in the sense of \cite[Definition 2.2.11]{spivak2021functorial}. We will often denote a $(c,d)$-bicomodule $p$ as $c \bifrom[p] d$.
\end{definition}

In \cite[Corollary 2.2.10]{spivak2021functorial}, the author established using a theorem of Shulman \cite[Theorem 11.5]{shulman2008framed} that there is a double category $\comod(\poly)$ (in fact an equipment) whose objects are comonoids and horizontal morphisms are bicomodules.

\begin{definition}\label{defcatsharp}
$\ccatsharp$ is the pseudo-double category $\comod(\poly)$ which has
\begin{itemize}
	\item as objects, the comonoids in $\poly$;
	\item as vertical morphisms, the comonoid homomorphisms;
	\item as horizontal morphisms from $c$ to $d$, the $(c,d)$-bicomodules;
	\item as squares between homomorphisms $\phi,\psi$ and bicomodules $p,p'$, the morphisms of polynomials $\gamma \colon p \to p'$ such that the diagram in \eqref{eqn.square} commutes;
	\begin{equation}\label{eqn.square}
	\begin{tikzcd}
	c \tri p \dar[swap]{\phi \tri \gamma} & p \lar \rar \dar{\gamma} & p \tri d \dar{\gamma \tri \psi} \\
	c' \tri p' & p' \lar \rar & p' \tri d'
	\end{tikzcd}
	\end{equation}
	\item as horizontal identities, the comultiplication bicomodules $c \tri c \From{\delta} c \To{\delta} c \tri c$; and
	\item as composition of horizontal morphisms $c \bifrom[p] d \bifrom[q] e$, the bicomodule $p \tri_d q$ on the top row of \eqref{eqn.biccomp},
	\begin{equation}\label{eqn.biccomp}
	\begin{tikzcd}
	c \tri (p \tri_d q) \dar & p \tri_d q \dar \rar \lar & (p \tri_d q) \tri e \dar \\
	c \tri p \tri q \dar[shift right=2] \dar[shift left=2] & p \tri q \dar[shift right=2] \dar[shift left=2] \rar \lar & p \tri q \tri e \dar[shift right=2] \dar[shift left=2] \\
	c \tri p \tri d \tri q & p \tri d \tri q \rar \lar & p \tri d \tri q \tri e
	\end{tikzcd}
	\end{equation}
	where each object in the top row of \eqref{eqn.biccomp} is computed as the equalizer of the column below it, using the fact that the functors $c \tri -$ and $- \tri e$ preserve connected limits, and the maps between them are induced by the underlying transformations between equalizer diagrams. This also shows how to horizontally compose squares, as a pair of adjacent squares provides the data of a transformation of equalizer diagrams which induces a map between the composite bicomodules.\qedhere
\end{itemize}
\end{definition}

\section{Categories, cofunctors, and prafunctors}\label{sec.prafunctors}


The motivation for studying $\ccatsharp$ comes from recent results of Ahman--Uustalu \cite{ahman2014when} and Garner%
\footnote{We refer to \href{https://www.youtube.com/watch?v=tW6HYnqn6eI}{Garner's HoTTEST video}, where the proof was sketched; see also \cite{spivak2021functorial}.}
 that, respectively, comonoids in $\poly$ are precisely categories and that bicomodules between them are precisely parametric right adjoint functors (sometimes shortened to \emph{prafunctors}) between their copresheaf categories. This makes $\ccatsharp$ a natural setting for categorical database theory \cite{spivak2012functorial,spivak2021functorial}, where database schemas are categories, instances are copresheaves, and queries (along with more general data migration operations) are prafunctors.

\begin{definition}\label{comonoidcategory}
For a polynomial comonoid $(c,\epsilon,\delta)$, its corresponding (small) category has
\begin{itemize}
	\item as objects, elements of the set $c(1)$;
	\item as morphisms out of an object $C \in c(1)$, the set $c[C]$;
	\item as codomain assignment for morphisms out of $C$, the function $\delta_1(c) \colon c[C] \to c(1)$;
	\item as composition of morphisms out of $C$, the function $\delta^\sharp_C \colon c[C] \times_{c(1)} c_\ast(1) \to c[C]$; and
	\item as the identity morphism at $C$, the function $\epsilon^\#_C \colon 1 \to c[C]$.\qedhere
\end{itemize}
\end{definition}

To go the other way, suppose given a small category $\cat{C}$. For any object $C\in\ob(\cat{C})$, let $\cat{C}[C]\coloneqq\sum_{C'\in\ob(\cat{C})}\cat{C}(C,C')$ denote the set of all morphisms emanating from $C$. Then the polynomial comonad for $\cat{C}$ is carried by the polynomial $c\coloneqq\sum_{C\in\ob(\cat{C})}\yon^{\cat{C}[C]}$. The counit map $\epsilon\colon c\to\yon$ consists of a choice of morphism out of each object, which we take to be the identity. We leave the unpacking of the comultiplication map $\delta\colon c\to c\tri c$---which handles codomains and composition---to the reader; see \href{https://www.youtube.com/watch?v=2mWnrgPIrlA}{this video} for an elementary unpacking.

So comonoids in $\poly$ are (small)%
\footnote{From now on, we may refer to comonoids in $\poly$ simply as categories, rather than emphasizing their smallness.}
categories. Comonoid homomorphisms, however, correspond not to functors but to cofunctors.

\begin{definition}[{\cite[Definition 2.2.2]{spivak2021functorial}}]
For categories $c$ and $d$ (regarded as polynomial comonoids), a cofunctor $c \to d$ is a function $\phi_1 \colon c(1) \to d(1)$ along with, for each $C \in c(1)$, a function $d[\phi_1C] \to c[C]$ which preserves identities, codomains, and composites.
\end{definition}

For $c$ a category, we write $c\set$ for the category of copresheaves on $c$, meaning functors $c \to \smset$. For $X$ a $c$-copresheaf and $C \in c(1)$ an object, we write $X_C$ for $X(C)\in\smset$.

\begin{definition}\label{defprafunctor}
A parametric right adjoint functor $F \colon d\set \to c\set$ is a functor with the following form, for any $d$-copresheaf $X$ and object $C \in c(1)$,
\[
F(X)_C = \sum_{I \in p_C(1)} \Hom_{d\set}(p[I],X)
\]
where $p_{(-)}(1)$ is a functor $c \to \smset$ (which we will denote by simply $p(1)$), $p_C(1)$ is its evaluation at $C$, and $p[-]$ is a functor $\left(\int p(1)\right)\op \to d\set$ from the dual of the category of elements of $p(1)$.

When $p$ is a $(c,d)$-bicomodule and $C \in c(1)$, we have $p(1)\cong\sum_{C\in c(1)}p_C(1)$, and we recover $p_C(1)$ as the preimage of $C$ under the function $p(1) \To{\lambda(1)} (c \tri p)(1) \To{c\tri\,!} c(1)$. Moreover, for $I \in p_C(1)$ and $D \in d(1)$, the set $p[I]_D$ is the preimage of $D$ under the function $p[I] \to d(1)$ given by the element $1\To{I}p(1) \To{\rho(1)} (p \tri d)(1)$.
\end{definition}

Based on this interpretation, we will often denote a $(c,d)$-bicomodule $p$ as
\[
\sum_{C \in c(1)} \sum_{I \in p_C(1)} \yon^{p[I]}
\]
where $p[I]$ is presumed to have the structure of a $d$-copresheaf.

\begin{example}
For any set $A$, the linear polynomial $A\yon$ has a unique comonoid structure; it corresponds to the discrete category on $A$. Cofunctors $A\yon\to B\yon$ are functions $A\to B$.
\end{example}

\begin{example}
For $c$ any category, a $(c,0)$-bicomodule $p$ is a parametric right adjoint functor from $0\set$, the terminal category, to $c\set$. The particular copresheaf on $c$ this functor picks out is $p(1)$, whose elements are positions of $p$ and whose $c$-copresheaf structure is determined by the map $c \tri p \From{\lambda} p$. As there is also a map $p \to p \tri 0$ which preserves the positions of $p$, and forces the polynomial $p$ to have an empty set of directions. The category of $(c,0)$-bicomodules and maps between them as in \eqref{eqn.square} with $\phi,\psi$ identities is equivalent to the category $c\set$.

The composition of a $(c,d)$-bicomodule and a $(d,0)$-bicomodule is precisely the $c$-copresheaf given by applying the prafunctor $d\set \to c\set$ to a $d$-copresheaf.
\end{example}

\begin{example}
More generally, a parametric right adjoint functor $d\set \to c\set$ is a right adjoint precisely when it arises from a profunctor from $c$ to $d$: a copresheaf on $c\op \otimes d$ induces a functor $c\op \to d\set$ whose corresponding singular functor is a right adjoint $d\set \to c\set$. However, unlike when $d$ is discrete, the left adjoint of this prafunctor will not generally be a prafunctor itself.
\end{example}

We now describe how the identities and composition of bicomodules given in \cref{defcatsharp} behave under the correspondence with prafunctors from \cref{defprafunctor}. For a $(c,d)$-bicomodule $p$ and a $(d,e)$-bicomodule $q$ of the forms
\[
p = \sum_{C \in c(1)}\sum_{I \in p_C(1)} \yon^{p[I]} \qqand q = \sum_{D \in d(1)}\sum_{J \in q_D(1)} \yon^{q[J]},
\]
the equalizer of $p \tri q \tto p \tri d \tri q$ has as positions the subset of functions $p[I] \to q(1)$ which are morphisms between the associated $d$-copresheaf structures; this is because the two maps to $p \tri d \tri q$ each append such a map with the data of either the restrictions of elements of $p[I]$ under maps in $d$ or those of elements in $q(1)$, which in the equalizer must agree. The directions for a position given by $f \colon p[I] \to q(1)$ is the coequalizer of the disjoint union 
\[
(p \tri q)[I,f] = \sum_{i \in p[I]} q[f(i)]
\]
under the maps respectively sending $i$ to its restrictions along maps in $d$ within the $d$-copresheaf $p[I]$ and mapping $q[f(i)]$ to the arities of the restriction of $f(i)$ along maps in $d$ according to the left $d$-module structure of $q$ on directions. These identifications turn the disjoint union $(p \tri q)[I,f]$ into the corresponding colimit 
\[
(p \tri_d q)[I,f] = \colim_{i \in p[I]} q[f(i)]
\]
indexed by the category of elements of $p[I]$ as a $d$-copresheaf. It is easily checked (as stated in \cite[Proposition 1.8]{shapiro2022thesis} and a consequence of the proofs of \cite[Propositions 3.11,3.12]{garner2018shapely}) that these positions and directions agree with those of the composite of the corresponding parametric right adjoint functors.

The identity bicomodule $c \bifrom[c] c$ has the form $\sum\limits_{C \in c(1)} \yon^{c[C]}$, so it has a single operation for each object of $c$ with arity the corepresentable copresheaf $c[C]$.

\section{Right coclosure and left Kan extension}

We now recall the \emph{right coclosure} or \emph{left Kan extension}.

\begin{definition}[{\cite[Proposition 2.4.6]{spivak2021functorial}}]\label{coclosure}
For a $(d,e)$-bicomodule $q$, the functor $- \tri_d q$ from $(c,d)$-bicomodules to $(c,e)$-bicomodules has a left adjoint $\lens{-}{q}$. For a $(c,e)$-bicomodule $p$ its carrier is defined to be
\begin{equation}\label{eqn.coclosure}
\lens{p}{q} \coloneq \sum_{C \in c(1)}\sum_{I \in p_C(1)} \yon^{q \tri_e p[I]},
\end{equation}
where $p[I]$ is regarded as an $(e,0)$-bicomodule.
\end{definition}

We note the unit and counit of this adjunction for convenience:
\begin{equation}\label{eqn.coclosure_unit_counit}
  p\to\lens{p}{q}\tri q
  \qqand
	\lens{r \tri q}{q}\to r
\end{equation}
The former illustrates how the right coclosure from \eqref{eqn.coclosure} corresponds to the left Kan extension, equivalently in $\ccatsharp$ and the bicategory of copresheaf categories and familial functors.
\[
\begin{tikzcd}
	c\ar[r,bimr-biml, "p", ""' name=p]&
	e\ar[d,biml-bimr, "q"]\\&
	d\ar[ul, bend left, biml-bimr, pos=.8, "\lens{p}{q}", ""' name=cocl]
	\ar[from=p, to=p|-cocl.south, shorten >=-8pt, Rightarrow]
\end{tikzcd}
\hspace{1in}
\begin{tikzcd}
	e\set\ar[d, "q\tri-"']\ar[r, "p\tri-", ""' name=p]&
	c\set\\
	d\set\ar[ur, bend right, "\tn{Lan}_pq"', "" name=cocl]
	\ar[from=p, to=p|-cocl.south, shorten >=0pt, Rightarrow]
\end{tikzcd}
\]

\begin{lemma}\label{lemma.selection}
Given a polynomial $p$ and a polynomial comonoid $c$, the right coclosure $\lens{p\tri c}{p}$ is also a comonoid.
\end{lemma}
\begin{proof}
We need to produce a comonoid structure
\[
\lens{p\tri c}{p}\to\yon
\qqand
\lens{p\tri c}{p}\to\lens{p\tri c}{p}\tri \lens{p\tri c}{p}
\]
In both cases we use the universal properties from \eqref{eqn.coclosure_unit_counit}:
\begin{gather*}
  \lens{p\tri c}{p}\to
 	\lens{p}{p}\to\yon
\\
  \lens{p\tri c}{p}\to
  \lens{p\tri c\tri c}{p}\to
	\lens{\lens{p\tri c}{p}\tri p\tri c}{p}\to
	\lens{\lens{p\tri c}{p}\tri\lens{p\tri c}{p}}{p}\to
	\lens{p\tri c}{p}\tri\lens{p\tri c}{p}
\end{gather*}
It is routine to show that this is associative and unital.
\end{proof}

For a more detailed description of this category, see \cite{Spivak:blogselection}.

\chapter{Basic Category Theory in $\ccatsharp$}

While cofunctors and prafunctors are interesting and useful branches of category theory, they are not the stuff of a category theorist's typical toolbox. However, traditional features of category theory can also be recovered in $\ccatsharp$ by various means which we now discuss.

\section{Products and coproducts}

Both monoidal products $\otimes$ and $+$ have a \emph{duoidal} relationship with composition $\tri$, meaning there are natural morphisms
\begin{equation}\label{eqn.duoidal}
(p \tri q) \otimes (r \tri s) \to (p \otimes r) \tri (q \otimes s) \qqand (p \tri q) + (r \tri s) \to (p + r) \tri (q + s).
\end{equation}
As a general consequence of duoidality, comonoids in $\poly$ are closed under $+$ and $\otimes$.

\begin{theorem}[{\cite[Proposition 2.6.2]{spivak2021functorial}}]
For categories $c,d$ regarded as polynomial comonoids, $c + d$ corresponds to the usual coproduct and $c \otimes d$ to the usual product of $c$ and $d$ as categories. Similarly, $0$ corresponds to the empty category and $\yon$ to the terminal category.
\end{theorem}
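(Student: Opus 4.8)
The plan is to use the duoidal maps in \eqref{eqn.duoidal} to equip $c+d$ and $c\otimes d$ with their comonoid structures, and then to read off the associated categories through the dictionary of \cref{comonoidcategory}, checking in each case that the result is the usual (co)product. I would dispatch the two unit cases first, since they are immediate. The polynomial $0$ has no positions, so by \cref{comonoidcategory} the corresponding category has no objects and is the empty category. The polynomial $\yon$ has a single position $*$ and a single direction; the counit $\epsilon^\#_*$ forces that direction to be the identity, and there are no further morphisms, so $\yon$ is the terminal category. Note that $0$ and $\yon$ are the units of $+$ and $\otimes$ respectively, matching the fact that the empty and terminal categories are the units of categorical coproduct and product.

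For the coproduct, I would first record that the positions of $c+d$ are $c(1)+d(1)$ and that the directions over a position are inherited unchanged from whichever of $c$ or $d$ contributed it. Thus under \cref{comonoidcategory} the objects form the disjoint union of the objects of $c$ and $d$, and the morphisms out of an object are exactly its morphisms in the original category. It then remains to trace the induced comultiplication—obtained by composing the comultiplications $\delta_c,\delta_d$ with the duoidal map $(p\tri q)+(r\tri s)\to(p+r)\tri(q+s)$—to confirm that codomains, composition $\delta^\#$, and identities are all computed within a single summand, so that no morphism ever crosses between the two components. This yields precisely the coproduct category $c\sqcup d$.

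For the product, the positions of $c\otimes d$ are $c(1)\times d(1)$ and the directions over $(C,D)$ are $c[C]\times d[D]$, so the objects are pairs of objects and a morphism out of $(C,D)$ is a pair of morphisms, one out of $C$ and one out of $D$. The substantive step, and the one I expect to be the main obstacle, is unwinding the induced comultiplication through the duoidal map $(p\tri q)\otimes(r\tri s)\to(p\otimes r)\tri(q\otimes s)$ to verify that the codomain assignment sends $(f,g)$ to $(\cod f,\cod g)$, that $\delta^\#_{(C,D)}$ is componentwise composition, and that $\epsilon^\#_{(C,D)}$ selects $(\id_C,\id_D)$. Granting this bookkeeping, \cref{comonoidcategory} returns exactly the product category $c\times d$.

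Finally, I would remark that each of these identifications is manifestly natural in $c$ and $d$, so that the correspondences respect cofunctors and not merely the underlying categories; together with the unit computations this exhibits $+$ and $\otimes$ as realizing the categorical coproduct and product, with the expected universal properties. The only genuine calculation is the componentwise tracing in the product case; the coproduct and unit cases follow formally from the position–direction descriptions of the monoidal structures and the dictionary of \cref{comonoidcategory}.
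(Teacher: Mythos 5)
Your core argument is correct and is essentially the intended one: the paper does not reprove this statement (it cites \cite[Proposition 2.6.2]{spivak2021functorial}), and its lead-in --- duoidality of $+$ and $\otimes$ over $\tri$ endows $c+d$ and $c\otimes d$ with comonoid structures --- is exactly your starting point, after which unwinding \cref{comonoidcategory} is the deferred ``bookkeeping,'' which does go through (the interchange maps act summand-wise for $+$ and coordinate-wise for $\otimes$, on positions and directions alike). One small omission: the counit of $c+d$ requires not just the interchange map but also the duoidal structure map $\yon+\yon\to\yon$; this is part of the same duoidal data, so nothing breaks.

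The one claim you should retract is the closing assertion that these identifications exhibit $+$ and $\otimes$ as coproduct and product ``with the expected universal properties'' in a cofunctor-respecting sense. Naturality of the identifications in cofunctors is fine, and $c+d$ really is the coproduct in $\catsharp$ (comonoids and comonoid homomorphisms), but $c\otimes d$ is \emph{not} a product there: pairings of cofunctors against the evident projections generally fail to exist. Concretely, let $c=d=e$ be the walking arrow, with objects $0,1$ and nonidentity morphism $a\colon 0\to 1$, and take both cofunctors $e\to c$ and $e\to d$ to be the identity. Any pairing $\chi\colon e\to c\otimes d$ compatible with the projection cofunctors $(i,j)\mapsto i$, $f\mapsto(f,\id_j)$ (resp.\ the second coordinate) must send $i\mapsto(i,i)$ on objects and must satisfy $\chi^\#_0(a,\id_0)=a$; the codomain law for cofunctors then forces $\chi_1(\cod a)=(1,1)$ to equal $\cod(a,\id_0)=(1,0)$, a contradiction. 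This is precisely why the theorem is phrased as ``corresponds to the usual product \emph{as categories}'': the universal property of the product lives in $\smcat$ (categories and functors), not in $\catsharp$.
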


\begin{example}
For categories $c,d$, there is a bicomodule $c \otimes d \bifrom[c \times d] c + d$ where the set $(c \times d)(1) = c(1) \times d(1)$ forms the elements of the terminal copresheaf on $c \otimes d$ and each direction set $(c \times d)[C,D] = c[C] + d[D]$ forms the elements of the copresheaf $(c[C],d[D])$ in $(c+d)\set \simeq c\set \times d\set$. The corresponding prafunctor sends the pair $(X,Y)$ of copresheaves $X$ on $c$ and $Y$ on $d$ to the copresheaf $X \boxtimes Y$ on $c \otimes d$ with
\begin{align*}
	(X \boxtimes Y)_{C,D} &= 
  \Hom_{(c+d)\set}\left((c[C],d[D]),(X,Y)\right)\\&\cong 
  \Hom_{c\set}(c[C],X) \times \Hom_{d\set}(d[D],Y) \cong X_C \times Y_D.
\end{align*}
because $c[C]$ and $d[D]$ correspond to representable copresheaves. The prafunctor we have thus described is sometimes called the \emph{external product on copresheaves}.
\end{example}

\section{Three homes for categories}

We now show how categories live in $\ccatsharp$ in at least three different ways, and how to mediate between them. Categories are, simultaneously:
\begin{itemize}
	\item comonoids in $\poly$, and hence objects in $\ccatsharp$ (\cref{comonoidcategory});
	\item algebras for the parametric right adjoint monad $\path$ on graphs (\cref{pathmonad}) \cite[Section II.7]{MacLane:1998a}; and
		\item monads in the double subcategory of $\ccatsharp$ consisting of linear comonoids and linear bicomodules (spans) \cite[5.4.3]{benabou1967introduction}.
\end{itemize}

\begin{definition}\label{def.graph_g}
We denote by $g$ the category $\edge \Tto{s}{t} \vertex$ whose copresheaves are precisely graphs, and by $\vec n$ the graphs with vertices $0,...,n$ and edges $i \minus 1 \to i$ for all $1 \le i \le n$.
\end{definition}

\begin{definition}\label{pathmonad}
The bicomodule $g \bifrom[\path] g$ has carrier given by $\{\vertex\}\yon + \{\edge\}\sum\limits_{n \in \nn} \yon^{\vec n}$, where the labels $\edge,\vertex$ indicate how the left coaction is defined on positions.
\end{definition}

This is a monad in $\ccatsharp$ whose corresponding prafunctor is the free category monad on graphs: it is the identity on vertices and adds in formal associative composites for paths of edges with any length $n$, which are precisely the maps into a graph from $\vec n$ \cite[Example C.3.3]{Leinster:2004a}. A category is then precisely a graph $X$, which can be modeled as a $(g,0)$-bicomodule, equipped with a left module structure of the form $\path \tri_g X \to X$. 

Given a category $c$, there is a bicomodule $g \bifromlong{50pt}{\{\vertex\}c + \{\edge\}c_\ast} c$, where $c_\ast \coloneq \sum\limits_{C \in c(1)} c[C]\yon^{c[C]}$. The left $g$-comodule structure arises from the cartesian source and target morphisms $c_\ast \to c$, while the right $c$-comodule structure is given by the comultiplication $c \to c \tri c$ and its composition with the source morphism $c_\ast \to c$.

The corresponding prafunctor $c\set \to g\set$ sends a copresheaf $X$ on $c$ to the graph for which a vertex is an element of $X$ and an edge is a pair of a morphism in $c$ and an element of $X$ over its source object. This is precisely the underlying graph of the category of elements of $X$, and as such $c+c_\ast$ has a left $\path$-module structure $\path \tri_g (c+c_\ast) \to c+c_\ast$ which induces by precomposition a left $\path$-module on $(c + c_\ast) \tri_c X$ for any copresheaf $X$: this $\path$-algebra is precisely $X$'s category of elements. Applying this to the terminal copresheaf $c \bifrom[c(1)] 0$ recovers the category $c$ itself as a $\path$-algebra.

Furthermore, for any functor $f$ from $c$ to $d$, there is a bicomodule $c \bifrom[\Delta_f] d$, where $\Delta_f\coloneqq \sum\limits_{C \in c(1)} \yon^{d[f(C)]}$. It comes equipped with a canonical morphism $(c + c_\ast) \tri_c \Delta_f \to d + d_\ast$ of $(c,d)$-bicomodules which commutes with the $\path$-module structures of $c+c_\ast$ and $d+d_\ast$. As $\Delta_f \tri_d d(1) \cong c(1)$ as $(c,0)$-bicomodules, we have constructed in $\ccatsharp$ the morphism of $\path$-algebras corresponding to the functor $f$.




We now describe how each object in $\ccatsharp$ also gives rise to a monad among spans, using the fact that for discrete categories $A\yon,B\yon$ an $(A\yon,B\yon)$-bicomodule $p$ can be summarized by a diagram $B \From{g} p_\ast(1) \to p(1) \To{f} A$ of sets and functions. The left coaction $A\yon \tri p \from p$ is cartesian and given on positions by $\langle f,\id \rangle \colon p(1) \to A \times p(1)$, and the right coaction $p \to p \tri B\yon$ is also cartesian and on positions sends $I \in p(1)$ to $(I, g_{p[I]} \colon p[I] \to B)$.

For any category $c$, there is a bicomodule $c(1)\yon \bifrom[c] c(1)\yon$ given by the diagram
\begin{equation}\label{eqn.pispan}
c(1) \From{t} c_\ast(1) \To{s} c(1) \To{\id} c(1)
\end{equation}
where the left and middle functions are respectively the target and source functions from the set $c_\ast(1)$ of morphisms in $c$ to the set of objects $c(1)$.

By \cite[Proposition 2.5.4]{spivak2021functorial}, a bicomodule between discrete categories whose rightmost function is an identity \eqref{eqn.pispan} is always a right adjoint in $\ccatsharp$, whose left adjoint is the bicomodule given by the diagram
\[
c(1) \From{s} c_\ast(1) \To{\id} c_\ast(1) \To{t} c(1).
\] 
By \cite[Proposition 2.5.6]{spivak2021functorial}, as $c(1) \bifrom[c] c(1)$ is a comonad in $\ccatsharp$ its left adjoint $c(1) \bifrom[c\ldag] c(1)$ is a linear monad in $\ccatsharp$, i.e.\ a monad in $\mathbb{S}\Cat{pan}$, i.e.\ a category. This gives a third home for the category $c$. As desired, for categories $c,d$ a functor between them is a monad map between their corresponding left adjoint spans, so this provides another encoding of functors in $\ccatsharp$.

\section{Opposites}\label{sec.op}

Representing categories as spans allows for a construction of dual categories using only universal constructions in $\ccatsharp$. In \cite[Proposition 2.7.3]{spivak2021functorial}, the author defines a \emph{closure} for the category of $(c,d)$-bicomodules. When $c = A\yon$ and $d = B\yon$, this has the form
\[
\ih{A\yon}{p}{q}{B\yon} \coloneq \sum_{\substack{a \in A \\ \phi \colon p_a \to q_a}} \yon^{\sum\limits_{I \in p_a(1)} q[\phi_1(I)]}
\]
where the maps $p_a \to q_a$ are morphisms of $(\yon,B\yon)$-bicomodules. We can then define a \emph{dualizing} operation for $(A\yon,B\yon)$-bicomodules by setting 
\[
p^\vee \coloneq \ih{A\yon}{p}{AB\yon}{B\yon} = \sum_{a \in A} \Hom(p_a, B\yon) \yon^{p_a(1)}.
\]
In particular, this dual interpolates between left-adjoint bicomodules of the form $B \from C = C \to A$ and right-adjoint bicomodules of the form $B \from C \to A = A$.

This allows spans from $A$ to $A$, modeled as left-adjoint $(A\yon,A\yon)$-bicomodules, to be reversed using only adjunctions and duals: given a left adjoint $p$ represented by $A \From{f} C = C \To{g} A$, its adjoint $p\ldag$ is represented by $A \From{g} C \To{f} A = A$ and its dual $p^\vee$ by $A \From{f} C \To{g} A = A$, so both $(p\ldag)^\vee$ and $(p^\vee)\ldag$ are represented by $A \From{f} C = C \To{g} A$, the reverse of $p$.

\begin{theorem}
For $c$ a category regarded as a $(c(1),c(1))$-bicomodule, its opposite category $c\op$ is given by the $(c(1),c(1))$-bicomodule $(c\ldag)^\vee \cong (c^\vee)\ldag$.
\end{theorem}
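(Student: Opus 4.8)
The plan is to evaluate the two bicomodules $(c\ldag)^\vee$ and $(c^\vee)\ldag$ directly from the span-reversal rules recorded just above the statement, and then to recognize their common value as the comonoid presenting $c\op$ through \cref{comonoidcategory}.

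First I would assemble the span presentations. As a $(c(1)\yon,c(1)\yon)$-bicomodule, $c$ is the right adjoint $c(1)\From{t}c_\ast(1)\To{s}c(1)\To{\id}c(1)$, and by \cite[Propositions 2.5.4 and 2.5.6]{spivak2021functorial} its left adjoint $c\ldag$ is the span $c(1)\From{s}c_\ast(1)=c_\ast(1)\To{t}c(1)$. Applying the dual rule for left-adjoint bicomodules to $p=c\ldag$ then produces $(c\ldag)^\vee=c(1)\From{s}c_\ast(1)\To{t}c(1)\To{\id}c(1)$, a right-adjoint bicomodule obtained from $c$ by interchanging its source and target legs. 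For the other expression I would use that $\vee$ interpolates between the two normal forms while fixing the outer legs --- so on spans it is the involution toggling $B\from C=C\to A$ with $B\from C\to A=A$ --- giving $c^\vee=c(1)\From{t}c_\ast(1)=c_\ast(1)\To{s}c(1)$; the adjoint rule then returns $(c^\vee)\ldag=c(1)\From{s}c_\ast(1)\To{t}c(1)\To{\id}c(1)$, the very same bicomodule. This already yields the isomorphism $(c\ldag)^\vee\cong(c^\vee)\ldag$, indeed an equality of presentations.

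Next I would read off the category underlying this bicomodule via \cref{comonoidcategory}: its objects are again the elements of $c(1)$, the morphisms out of an object $X$ are the directions over $X$ --- now the $c$-morphisms with target $X$ --- and the codomain of such a morphism is its $c$-source. Hence a morphism $X\to Y$ of $(c\ldag)^\vee$ is precisely a morphism $Y\to X$ of $c$, which is the defining description of $c\op$.

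The step I expect to be the main obstacle is checking that the comultiplication that $\vee$ and $\ldag$ transport onto $(c\ldag)^\vee$ genuinely is the composition of $c\op$, namely $c$-composition with its arguments reversed, rather than only reproducing the correct underlying graph; since one polynomial can carry many comonoid structures, this is where the real content sits. I would discharge it by tracing the counit and comultiplication through the adjunction of \cite[Propositions 2.5.4 and 2.5.6]{spivak2021functorial} and the closure defining $\vee$ in \cite[Proposition 2.7.3]{spivak2021functorial}, verifying that the equalizer presenting the comultiplication of $(c\ldag)^\vee$ reindexes each composable pair of $c$ in the opposite order, which under \cref{comonoidcategory} is exactly the comultiplication of $c\op$.
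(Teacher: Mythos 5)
Your proposal is correct and follows essentially the same route as the paper, whose justification of this theorem is exactly the span-reversal computation you perform: $c\ldag$ is the span $c(1)\From{s}c_\ast(1)=c_\ast(1)\To{t}c(1)$, and applying $\vee$ (shape-toggling, leg-preserving) and $\ldag$ in either order yields the right-adjoint-shaped bicomodule $c(1)\From{s}c_\ast(1)\To{t}c(1)\To{\id}c(1)$, which \cref{comonoidcategory} identifies as $c\op$. The comonoid-structure verification you flag as the ``main obstacle'' is left implicit in the paper, which contents itself with the graph-level identification (the category structure being carried by the monad structure on the span $c\ldag$ and transported along the adjoint/dual correspondences), so your plan is, if anything, more careful than the published argument.
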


\chapter{Generalized Polynomials in $\ccatsharp$}

Much of the development of the theory of polynomials (for instance \cite{kock2012polynomial,weber2015polynomials,shapiro2023structures}) is focused on generalizing the basic aspects of the theory to categories other than $\smset$. We show that, in fact, these categories of polynomials embed fully faithfully into categories of bicomodules, so that the constructions in these contexts are merely specializations of the analogous constructions for bicomodules in $\ccatsharp$.

\section{Polynomials in a category $\E$}

Throughout this section, let $\E$ be a category with pullbacks. Polynomials in $\E$ will generalize the definition of polynomials as morphisms $p_\ast(1) \to p(1)$ in $\smset$.

\begin{definition}
A \emph{polynomial} in $\E$ is an exponentiable morphism $p \colon P_\ast \to P$ in $\E$, and a \emph{morphism of polynomials} $p \to q$ in $\E$ consists of a morphism $P \to Q$ and a morphism $P_\ast \from P \times_Q Q_\ast$. We denote by $\poly_\E$ the category of polynomials in $\E$.

A \emph{typed polynomial} from $D$ to $C$ in $\E$ is a diagram $D \from P_\ast \to P \to C$ such that $P_\ast \to P$ is exponentiable, generalizing the definition of multivariable polynomials in $\smset$ of the form $D \from p_\ast(1) \to p(1) \to C$.
\end{definition}

Here the function $p(1) \to C$ separates the terms of the polynomial into $|C|$-many components (as in a polynomial function $\rr^D \to \rr^C$) while the function $D \from p_\ast(1)$ assigns the variable names from $D$ to the factors of each term in the polynomial.

\begin{definition}[{Based on \cite[Section 3]{kock2012polynomial} and \cite[Section 5.3]{shapiro2023structures}}]\label{defppoly}
The double category $\ppoly_\E$ of typed polynomials in $\E$ has
\begin{itemize}
	\item as objects, objects of $\E$;
	\item as vertical morphisms, morphisms of $\E$;
	\item as horizontal morphisms from $D$ to $C$, typed polynomials from $D$ to $C$;
	\item as squares between morphisms $f,g$ and typed polynomials $P_\ast \to P$ and $P'_\ast \to P'$, isomorphism classes of commuting diagrams as in \eqref{eqn.typedmorphism}, where the isomorphisms are those between choices of pullbacks which commute with the rest of the diagram;
\begin{equation}\label{eqn.typedmorphism}
\begin{tikzcd}
D \ar{dd}[swap]{f} & P_\ast \rar \lar & P \dar[equals] \rar & C \ar{dd}{g} \\
& \bullet \dar \uar{\phi^\ast} \rar \ar[phantom]{dr}[pos=0]{\lrcorner} & P \dar{\phi_1} \\
D' & P'_\ast \lar \rar & P' \rar & C'
\end{tikzcd}
\end{equation}
	\item as horizontal identities, typed polynomials of the form $C = C = C = C$; and
	\item composition of typed polynomials $P_\ast \to P$ and $Q_\ast \to Q$ given by the composite of the top row of morphisms of \eqref{eqn.typedcompose},
\begin{equation}\label{eqn.typedcompose}
\begin{tikzcd}
& \bullet_2 \ar{dd} \ar{rr} \ar[phantom]{ddrr}[pos=0]{\lrcorner} &[25pt] &[-20pt] \bullet_1 \ar{rr} \dar \ar[phantom]{ddrr}[pos=0]{\lrcorner} &[-25pt] & \Pi_p (Q \times_D P_\ast) \ar{dd} &[-25pt] {} \\
& & & Q \times_D P_\ast \ar{dl} \ar{dr} \\
& Q_\ast \rar{q} \ar{dl}[swap]{} & Q \ar{dr}{} & {} & P_\ast \rar{p} \ar{dl}[swap]{} & P \ar{dr}{} \\
E & & & D & & & C
\end{tikzcd}
\end{equation}
where $\Pi_p(Q \times_D P_\ast)$ is defined by the universal property that morphisms into it from an object $A$ correspond to pairs $(f_1 \colon A \to P, f_2 \colon f_1^\ast P_\ast \to Q)$ with $f_2$ commuting over $D$; in other words, the pullback square on the right in \eqref{eqn.typedcompose} is terminal among pullbacks of $p$ whose projection to $P_\ast$ factors through $Q \times_D P_\ast$.
\end{itemize}
As we discuss in the proof of \cref{polyEembed}, as we are constructing a locally fully faithful double functor out of $\ppoly_\E$ there is no need to define horizontal composition of squares as it can be deduced from horizontal composition in $\ccatsharp$. Vertical composition of squares is as given for morphisms of untyped polynomials in \cite[Definition 3.13]{shapiro2023structures}, though similarly this can be deduced from the vertical composition of squares in $\ccatsharp$.
\end{definition}

In particular, when $\E$ has finite limits we see that $\poly_\E$ is a monoidal category \cite[Section 3.2]{shapiro2023structures} as it agrees with the category $\ppoly_\E(1,1)$ with the monoidal structure given by the horizontal identity and composition.

\section{Embedding $\ppoly_\E$ into $\ccatsharp$}

As discussed in \cite[proof of Theorem 3.15]{shapiro2023structures}, the category $\poly_\E$ embeds fully faithfully into $\poly_{a\set}$ for $F \colon a\op \to \E$ any fully faithful dense functor, e.g.\ the identity functor for $a \coloneqq \E\op$. For such an $F$, let $F^\ast\colon\E\to a\set$ be given by $F^\ast(C)(A)\coloneqq\E(F(A),C)$.

\begin{theorem}\label{polyEembed}
For a fully faithful dense functor $F\colon a\op \to \E$, the category $\poly_\E$ embeds fully faithfully into the horizontal category $\ccatsharp(a,a)$. When $\E$ has finite limits, so composition can be defined, this functor is strong monoidal. In particular, for a polynomial $P_\ast \to P$ in $\E$, the corresponding $(a,a)$-bicomodule is given by
\[
\sum_{A \in a(1)} \sum_{x \colon F(A) \to P} \yon^{F^\ast(x^\ast P_\ast)}.
\]
More generally, when $\E$ has pullbacks there is a locally fully faithful double functor from $\ppoly_\E$ to $\ccatsharp$. It sends an object $C$ to the slice category $F/C \cong \int F^\ast(C)$, and a typed polynomial $D \from P_\ast \to P \to C$ to the $(F/C,F/D)$-bicomodule
\[
\sum_{\substack{A \in a(1) \\ F(A) \to C}} 
\sum_{x \in \Hom_{\E/C}(F(A),P)}
\yon^{F^\ast(x^\ast P_\ast)}.\qedhere
\]
\end{theorem}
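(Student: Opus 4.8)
The plan is to exhibit the construction as a composite and to reduce to a clean special case. Throughout I will use two consequences of the hypotheses on $F$: that $F^\ast = \E(F-,=)\colon\E\to a\set$ preserves finite limits (each $\E(F(A),-)$ preserves limits, and limits in $a\set$ are pointwise), and that full faithfulness of $F$ gives $F^\ast(F(A))\cong\yon_A$, the copresheaf on $a$ represented by $A$. First I would check that the displayed formula is a genuine bicomodule using the prafunctor presentation of \cref{defprafunctor}. Over an object $(A,u\colon F(A)\to C)\in F/C\cong\int F^\ast(C)$ the positions are the maps $x\colon F(A)\to P$ over $C$; these assemble into the copresheaf on $F/C$ determined by the map of $a$-copresheaves $F^\ast(P)\to F^\ast(C)$ induced by $P\to C$, which is the left coaction. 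The directions over such an $x$ are $F^\ast(x^\ast P_\ast)$, and the composite $x^\ast P_\ast\to P_\ast\to D$ makes this a copresheaf on $F/D\cong\int F^\ast(D)$, which is the right coaction. Functoriality of $x\mapsto F^\ast(x^\ast P_\ast)$ on $(\int F^\ast(P))\op$ follows from functoriality of pullback along $F(A')\to F(A)$ together with that of $F^\ast$.

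Because $F^\ast$ preserves pullbacks and $F^\ast(F(A))\cong\yon_A$, I would then identify the direction copresheaf $F^\ast(x^\ast P_\ast)=F^\ast(F(A)\times_P P_\ast)$ with $\yon_A\times_{F^\ast(P)}F^\ast(P_\ast)$, the fiber of the morphism $F^\ast(P_\ast)\to F^\ast(P)$ over the point $x\in F^\ast(P)(A)$. This shows that the bicomodule attached to a polynomial $P_\ast\to P$ in $\E$ coincides with the one attached to its image $F^\ast(P_\ast)\to F^\ast(P)$ under the fully faithful strong monoidal embedding $\poly_\E\hookrightarrow\poly_{a\set}$ recalled above, and likewise for typed polynomials and their morphisms. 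Since that embedding is already fully faithful and strong monoidal, it suffices to prove the theorem in the case $\E=a\set$ with $F=\yon$ the Yoneda embedding, where $F^\ast=\id$ and the construction sends an object $C$ to $\int C$ and a polynomial $Q_\ast\to Q$ to
\[
\sum_{A}\sum_{x\in Q(A)}\yon^{x^\ast Q_\ast}.
\]

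For local full faithfulness in the reduced case, I would observe that a square of bicomodules with identity vertical boundaries consists of a forward map of position copresheaves together with a compatible backward family on directions. Since $F^\ast=\id$, the position copresheaves are $Q$ and $R$ themselves, so a forward map is exactly a map $Q\to R$ of copresheaves, and the backward direction data unwinds precisely to a map $Q\times_R R_\ast\to Q_\ast$ over $Q$; together these are exactly a morphism of polynomials from $Q_\ast\to Q$ to $R_\ast\to R$, giving the bijection of \eqref{eqn.square}. For general $\E$ the same bijection is obtained by transporting along the already fully faithful embedding into $\poly_{a\set}$; concretely, full faithfulness of $F^\ast$ (which is density of $F$) recovers maps in $\E$ from maps of position copresheaves.

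The main obstacle is preservation of horizontal composition and of horizontal identities, that is, strong monoidality. The horizontal identity on $F/C$ is the comultiplication bicomodule on the comonoid $F/C$, and I would check it is the image of the identity typed polynomial $C\xleftarrow{\id}C\xrightarrow{\id}C$ by reducing, via $\Hom_{/C}(F(A),C)\cong\{u\}$ and $F^\ast(F(A))\cong\yon_A$, to the representable at $(A,u)$. For composition, the composite of typed polynomials in $\E$ is formed by a pullback followed by a dependent product (pushforward) along the exponentiable leg $P_\ast\to P$, whereas the composite bicomodule is the equalizer of \eqref{eqn.biccomp}. The crux is to match these: a position of $p\tri q$ is a section over the fiber $P_\ast$ of a choice of $q$-position, which under $F^\ast$ and the identification of fibers with direction copresheaves becomes exactly a map of $F/D$-copresheaves $p[I]\to q_{(-)}(1)$, i.e. the equalizer condition defining $p\tri_{F/D}q$, with the directions then matching by a further use of limit preservation of $F^\ast$. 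I expect this translation of the dependent product into the equalizer — where both exponentiability of the polynomials and density of $F$ are essential — to be the most delicate point, most cleanly carried out in the reduced case $\E=a\set$ and then transported. Assembling the object assignment $C\mapsto F/C$, the horizontal assignment above, and local full faithfulness on squares yields the double functor $\ppoly_\E\to\ccatsharp$, the remaining vertical data being routine; the untyped strong monoidal statement about $\poly_\E\to\ccatsharp(a,a)$ is then the restriction to $1\in\E$, for which $F/1=\int F^\ast(1)=a$.
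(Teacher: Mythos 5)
Your plan is architecturally different from the paper's proof: you factor the construction as $\ppoly_\E \to \ppoly_{a\set} \to \ccatsharp$ and propose to do all the substantive work in the reduced case $\E=a\set$, $F=\yon$, whereas the paper (\cref{proofPolyE}) computes directly with a general fully faithful dense $F$ throughout. The problem is that the first leg of your factorization is exactly as hard as the step it is meant to avoid. To transport horizontal composition from the reduced case you need the $F^\ast$-induced assignment $\ppoly_\E\to\ppoly_{a\set}$ to preserve composites of typed polynomials, i.e.\ an isomorphism
\[
F^\ast\bigl(\Pi_p(Q\times_D P_\ast)\bigr)\;\cong\;\Pi_{F^\ast p}\bigl(F^\ast(Q)\times_{F^\ast(D)}F^\ast(P_\ast)\bigr),
\]
and your appeal to ``the fully faithful strong monoidal embedding $\poly_\E\hookrightarrow\poly_{a\set}$ recalled above'' simply assumes this: the paper recalls only that this embedding is \emph{fully faithful}, not that it is strong monoidal for $\tri$, nor that it extends to a double functor on typed polynomials. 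Nor does this compatibility follow from the two facts you permit yourself ($F^\ast$ preserves limits and $F^\ast F(A)\cong a[A]$): a dependent product along an exponentiable map is not a limit, and limit preservation yields only a canonical comparison map $F^\ast\Pi_p\to\Pi_{F^\ast p}F^\ast$, whose invertibility requires full faithfulness of $F^\ast$ on slice categories --- which is precisely the positions half of the paper's own composition argument. So your plan to prove the dependent-product-versus-equalizer matching ``in the reduced case and then transported'' is circular: the transport \emph{is} that matching, for general $F$.

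Two smaller issues. First, the mechanism you cite for the directions matching (``a further use of limit preservation of $F^\ast$'') is not the operative one even when $F^\ast=\id$: the composite bicomodule's directions form the colimit $\colim_{i\in p[I]}q[f(i)]$ over the category of elements of $p[I]$, and collapsing it to the single pullback $x_2^\ast Q_\ast$ uses the canonical colimit decomposition of $F^\ast(x_1^\ast P_\ast)$ together with the fact that pullback in $a\set$ preserves colimits (local cartesian closedness), as in \eqref{eqn.directionsiso}; no limit-preservation statement substitutes for this, and the computation remains nontrivial in your reduced case. Second, you verify local full faithfulness only for squares with identity vertical boundary, while the theorem quantifies over arbitrary frames as in \eqref{eqn.squareframe}. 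None of this is fatal: the displayed $\Pi$-compatibility is provable (evaluate both sides at each $A\in a(1)$ and use full faithfulness of $F^\ast$), and with it plus the colimit-decomposition argument your factorization goes through --- but at that point you will have reproduced the paper's direct proof, with the reduction adding a layer of bookkeeping rather than removing one.
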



Here ``locally fully faithful'' means that for any fixed square boundary in $\ppoly_\E$, the function from its square fillers to squares with the corresponding boundary in $\ccatsharp$ is a bijection. In particular this implies that the category of typed polynomials in $\E$ from $D$ to $C$ maps fully faithfully to the category of $(F/C,F/D)$-bicomodules. In the case when $\E$ has finite limits, the first statement of \cref{polyEembed} follows from setting $C$ and $D$ to be the terminal object, resulting in a fully faithful strong monoidal functor from $\poly_\E$ to $(a,a)$-bicomodules.

Note that $F^\ast(x^\ast P_\ast)$, as an $a$-copresheaf over $F^\ast(D)$, is equivalently regarded as a copresheaf on $F/D$.

\begin{proof}
Following the approach of \cite[Section 3.2]{shapiro2023structures}, as the assignment $C \mapsto F/C$ is clearly functorial on the vertical categories, it suffices to show that the assignment on horizontal morphisms preserves identities and composition up to coherent isomorphism and that the given assignments are indeed locally fully faithful. The remaining structure and properties of a pseudo-double functor can then be deduced using local fully faithfulness, in the style of \cite[Proposition 3.25]{shapiro2023structures}, as can the composition of squares in $\ppoly$.\footnote{The specific analogue of that proposition would proceed by: 1) defining a tentative pseudo-double category as a pair of categories with the same objects and sets of squares filling boundaries of the appropriate type; 2) defining a tentative pseudo-double functor as an assignment on the categories and squares preserving vertical composition strictly and horizontal composition up to bidirectional squares; and 3) concluding that a tentative pseudo-double category with a locally fully faithful tentative pseudo-double functor to an established pseudo-double category endows the domain with the structure of a pseudo-double category such that the tentative pseudo-double functor is in fact a pseudo-double functor.} 

The identity polynomial $C = C = C = C$ is sent to the $(F/C,F/C)$-bicomodule
\[
\sum_{\substack{A \in a(1) \\ F(A) \to C}} \sum_{x \in \Hom_{\E/C}(F(A),C)} \yon^{F^\ast(x^\ast C)} \quad\cong\quad \sum_{\substack{A \in a(1) \\ F(A) \to C}} \yon^{F^\ast(F(A))} \quad\cong\quad \sum_{\substack{A \in a(1) \\ F(A) \to C}} \yon^{a[A]},
\]
as since $F$ is fully faithful $F^\ast(F(A)) \cong a[A]$. As an $F/C$-copresheaf, thiscopy of $a[A]$ corresponds to the copresheaf corepresented by the map $F(A) \to C$, whose elements are in bijection with the set $a[A]$. This is precisely the form of the identity $(F/C,F/C)$-bicomodule, so our desired double functor preserves horizontal identities.

For typed polynomials $D \from P_\ast \To{p} P \to C$ and $E \from Q_\ast \To{q} Q \to D$, their composite in $\ppoly_\E$ is sent to the $(F/C,F/E)$-bicomodule
\[
\sum_{\substack{A \in a(1) \\ F(A) \to C}} \sum_{x \in \Hom_{\E/C}(F(A),\Pi_p(Q \times_D P_\ast))} \yon^{F^\ast(x^\ast \bullet_2)},
\]
where $\bullet_2$ is defined via pullbacks in \eqref{eqn.typedcompose}, and the composite of the associated bicomodules in $\ccatsharp$ is the $(F/C,F/E)$-bicomodule
\[
\sum_{\substack{A \in a(1) \\ F(A) \to C}} \sum_{\substack{x_1 \in \Hom_{\E/C}(F(A),P) \\ x_2 \in \Hom_{d\set/F^\ast(D)}(F^\ast(x_1^\ast P_\ast),F^\ast(Q))}} \yon^{\colim\limits_{y \colon a[A'] \to F^\ast(x_1^\ast P_\ast)} y^\ast F^\ast(x_2^\ast Q_\ast)}.
\]

By the universal property of $\Pi_p(Q \times_D P_\ast)$, a morphism $x \colon F(A) \to \Pi_p(Q \times_D P_\ast)$ commuting over $C$ corresponds to a morphism $x_1 \colon F(A) \to P$ commuting over $C$ along with a map $\bar x_2 \colon x_1^\ast P_\ast \to Q$ commuting over $D$. As the functor $F^\ast$ is fully faithful, maps of the form $x_2$ and $\bar x_2$ are is bijective correspondence, so these bicomodules agree on positions.

To compute the pullback $x^\ast \bullet_2$ in terms of the maps $x_1,x_2$, consider the extension of \eqref{eqn.typedcompose} given in \eqref{eqn.composepb}.
\begin{equation}\label{eqn.composepb}
\begin{tikzcd}
& x_2^\ast Q_\ast \ar{rr} \dar \ar[phantom]{dr}[pos=0]{\lrcorner} &[25pt] &[-20pt] x_1^\ast P_\ast \ar{rr} \dar \ar[phantom]{dr}[pos=0]{\lrcorner} \ar[bend right=20]{dddl}[swap]{x_2} &[-25pt] & F(A) \dar{x} \ar[bend left=60]{ddd}{x_1} &[-25pt] {} \\
& \bullet_2 \ar{dd} \ar{rr} \ar[phantom]{ddrr}[pos=0]{\lrcorner} & {} & \bullet_1 \ar{rr} \dar \ar[phantom]{ddrr}[pos=0]{\lrcorner} & {} & \Pi_p (Q \times_D P_\ast) \ar{dd} \\
& & & Q \times_D P_\ast \ar{dl} \ar{dr} \\
& Q_\ast \rar{q} \ar{dl}[swap]{} & Q \ar{dr}{} & {} & P_\ast \rar{p} \ar{dl}[swap]{} & P \ar{dr}{} \\
E & & & D & & & C
\end{tikzcd}
\end{equation}
The pullback $x^\ast \bullet_1$ agrees with $x_1^\ast P_\ast$ by the cancellation property of pullbacks, as $x_1$ factors through $x$. Similarly, as $x_2$ factors through the projection $x_1^\ast P_\ast \to \bullet_1$, the pullback of the latter to $\bullet_2$ agrees with $x_2^\ast Q_\ast$. By composition of pullbacks then, we have that $x^\ast \bullet_2 \cong x_2^\ast Q_\ast$, so to show that our desired double functor indeed preserves horizontal composition it suffices to show that
\begin{equation}\label{eqn.directionsiso}
F^\ast(x_2^\ast Q_\ast) \cong \colim_{y \colon a[A'] \to F^\ast(x_1^\ast P_\ast)} y^\ast F^\ast(x_2^\ast Q_\ast).
\end{equation}

To see this, recall the canonical colimit decomposition
\begin{equation}\label{eqn.canonicalcolim}
F^\ast (x_1^\ast P_\ast) \cong \colim_{y \colon a[A'] \to F^\ast(x_1^\ast P_\ast)} a[A']
\end{equation}
of an object in a copresheaf category. As $a\set$ is locally cartesian closed, the pullback functor 
\[
a\set/F^\ast(x_1^\ast P_\ast) \to a\set/F^\ast(x_2^\ast Q_\ast)
\]
is a left adjoint and therefore preserves colimits. In the case of the colimit in \eqref{eqn.canonicalcolim}, this colimit preservation shows that \eqref{eqn.directionsiso} holds, as the left side is the pullback of the identity on $F^\ast(x_1^\ast P_\ast)$ to $F^\ast(x_2^\ast Q_\ast)$ and the right side is the colimit of the pullbacks $y^\ast F^\ast(x_2^\ast Q_\ast)$ of each map $a[A'] \to F^\ast(x_1^\ast P_\ast)$ along the same map. This completes the proof that our desired double functor preserves horizontal composition up to isomorphism.

It then remains to show local fully faithfulness. Consider an arrangement of typed polynomials as in \eqref{eqn.squareframe}.
\begin{equation}\label{eqn.squareframe}
\begin{tikzcd}
D \dar[swap]{f} & P_\ast \rar \lar & P \rar & C \dar{g} \\
D' & P'_\ast \lar \rar & P' \rar & C'
\end{tikzcd}
\end{equation}

A square filling in the associated diagram in $\ccatsharp$ has the form of a polynomial morphism
\[
\sum_{\substack{A \in a(1) \\ z \colon F(A) \to C}} 
\sum_{x \in \Hom_{\E/C}(F(A),P)}
\yon^{F^\ast(x^\ast P_\ast)}
\To{\phi}
\sum_{\substack{A \in a(1) \\ z' \colon F(A) \to C'}} 
\sum_{x' \in \Hom_{\E/C'}(F(A),P')}
\yon^{F^\ast(x'^\ast P'_\ast)}
\]
where $(A,z \colon F(A) \to C)$ is sent to the composite $(A,g \circ z \colon F(A) \to C \to C')$, the maps
\[
\phi_1^{A,z} \colon \Hom_{\E/C}(F(A),P) \to \Hom_{\E/C'}(F(A),P')
\]
are natural in $A$ and $z \colon F(A) \to C$, and the maps of $a$-copresheaves on directions
\[
\phi^\#_x \colon F^\ast(\phi_1^{A,z}(x)^\ast P'_\ast) \to F^\ast(x^\ast P_\ast)
\]  
are natural in $x$ (as an object in the category of elements of $F^\ast(P)$) and commute with $F^\ast(f) \colon F^\ast(D) \to F^\ast(D')$. The maps $\phi_1^{A,z}$ assemble into a map $F^\ast(P) \to F^\ast(P')$ commuting with $F^\ast(g) \colon F^\ast(C) \to F^\ast(D)$. As $F^\ast$ is fully faithful, this map arises uniquely from a map $\psi_1 \colon P \to P'$ commuting with $g$ as in \eqref{eqn.typedmorphism}.

Using the observations that $\phi_1^{A,z}(x) = \psi_1 \circ x$ and $F^\ast$ preserves pullbacks, we can equivalently express $\phi^\#_x$ as a map of the form $\bar x^\ast F^\ast(\psi_1^\ast P'_\ast) \to \bar x^\ast F^\ast(P_\ast)$, natural in $\bar x = F^\ast(x) \colon a[A] \to F^\ast(P)$. By the canonical colimit decomposition of $F^\ast(P)$ in $a\set$ and preservation of colimits by pullbacks, such a $F/P$-indexed natural transformation is uniquely determined by a morphism $F^\ast(\psi_1^\ast P'_\ast) \to F^\ast(P_\ast)$ which commutes over $F^\ast(P)$ and, by previous assumption on $\phi^\#_x$, over $F^\ast(D')$ as well. As $F^\ast$ is fully faithful, this is equivalently a morphism $\psi_1^\ast P'_\ast \to P_\ast$ over $P$ in $\E$ which also commutes over $D'$.

In conclusion, we have shown that squares in $\ccatsharp$ filling in the boundary associated to that of \eqref{eqn.squareframe} from $\ppoly_\E$ correspond bijectively with squares of this form in $\ppoly_\E$, completing the proof that the desired double functor is locally fully faithful, and thereby a pseudo-double functor.
\end{proof}

\begin{example}
Let $\cat{E}\coloneqq\smcat$ as a 1-category, and let $a\coloneqq\mathbf{\Delta}\op$ be the simplicial indexing category, with $F\colon a\op\to\cat{E}$ the fully faithful and dense functor sending $N\mapsto\vec{N}$. The exponentiable maps $\pi\colon\cat{E}\to\cat{B}$ in $\smcat$ are \emph{Conduch\'e} fibrations. The functor $\poly_\smcat\to\ccatsharp$ sends $\pi$ to the bicomodule $\mathbf{\Delta}\op\bifrom[p]\mathbf{\Delta}\op$ where $e$ is carried by
\[
  p\coloneqq
	\sum_{N\in\ob(\mathbf{\Delta})}\sum_{x\colon\smcat(\vec{N},\cat{B})}
	\yon^{\sum_{N'\in\ob(\mathbf{\Delta})}\smcat(\vec{N'},x^*\pi)}
\]
Thus the simplicial set of $p$-positions is the nerve of $\cat{B}$ and for each $N$-simplex $x$ in it, the simplicial set of $p$-directions is the nerve of the fiber of $\pi$ over $x$.
\end{example}

\section{Structures in $\poly_\E$ and $\ccatsharp$}

Following \cite[Proposition 2.7.1]{spivak2021functorial}, the category of $(c,d)$-bicomodules has a monoidal structure $(c(1)\yon^{d(1)},\ot{c}{d})$ where the tensor product is given by
\[
p \ot{c}{d} q \coloneq \sum_{C \in c(1)} \sum_{(I,J) \in p_C(1) \times q_C(1)} \yon^{p[I] \times_{d(1)} q[J]},
\]
where the fiber product $\times_{d(1)}$ of directions is the product on $d$-copresheaves. The composition product $\tri$ has a right coclosure $\lens{-}{-}$ (\cref{coclosure}). 

Meanwhile, in \cite[Chapter 4]{shapiro2023structures}, a tensor product $\otimes$, closure for $\otimes$, right coclosure for $\tri$, and indexed left coclosure for $\tri$ are defined in $\poly_\E$, though the (co)closures require additional assumptions on the category $\E$. 
In particular, when $\E$ has finite limits the Dirichlet tensor product on $\poly_\E$ is defined as the categorical product of morphisms (though this is not a product in the category $\poly_\E$), and is shown to form a duoidal category with the composition product. The unit of both monoidal structures is $\yon$, the identity morphism on the terminal object.



\begin{corollary}\label{Etensorproof}
The monoidal functor of \cref{polyEembed} is lax monoidal with respect to $\otimes$ and $\ot{a}{a}$.
\end{corollary}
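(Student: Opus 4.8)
The plan is to exhibit explicit lax structure maps and verify the coherence axioms, with the main content being a direct comparison of the two tensor products at the level of directions. Write $\Phi$ for the functor of \cref{polyEembed} and fix polynomials $p\colon P_\ast\to P$ and $q\colon Q_\ast\to Q$ in $\E$. Since $\otimes$ on $\poly_\E$ is the product of morphisms, $p\otimes q$ is $P_\ast\times Q_\ast\to P\times Q$; a map $F(A)\to P\times Q$ is a pair $(x,y)$, and a straightforward pullback computation gives $(x,y)^\ast(P_\ast\times Q_\ast)\cong x^\ast P_\ast\times_{F(A)}y^\ast Q_\ast$. First I would apply $F^\ast$, which preserves pullbacks and satisfies $F^\ast(F(A))\cong a[A]$, to obtain
\[
\Phi(p\otimes q)\cong\sum_{A\in a(1)}\sum_{(x,y)}\yon^{F^\ast(x^\ast P_\ast)\times_{a[A]}F^\ast(y^\ast Q_\ast)},
\]
whereas unwinding the definition of $\ot{a}{a}$ gives
\[
\Phi(p)\ot{a}{a}\Phi(q)\cong\sum_{A\in a(1)}\sum_{(x,y)}\yon^{F^\ast(x^\ast P_\ast)\times F^\ast(y^\ast Q_\ast)},
\]
the product now being taken in $a\set$. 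The two bicomodules thus have identical position sets and differ only in that the directions of the first are a fiber product over $a[A]$ of the directions of the second.

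Next I would take the lax structure map $\Phi(p)\ot{a}{a}\Phi(q)\to\Phi(p\otimes q)$ to be the identity on positions, with backward maps on directions the canonical inclusions $F^\ast(x^\ast P_\ast)\times_{a[A]}F^\ast(y^\ast Q_\ast)\hookrightarrow F^\ast(x^\ast P_\ast)\times F^\ast(y^\ast Q_\ast)$ of the fiber product into the full product. For the unit, one computes $\Phi(\yon)\cong\sum_A\yon^{a[A]}$, while the $\ot{a}{a}$-unit $a(1)\yon^{a(1)}$ has terminal direction copresheaf at each object; the unit comparison $a(1)\yon^{a(1)}\to\Phi(\yon)$ is then the identity on positions with backward maps the terminal projections $a[A]\to 1$. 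I would check that both polynomial morphisms respect the left and right $a$-comodule structures, which is immediate because those structures are induced by the copresheaf structures on directions, and both the inclusion and the terminal map are morphisms of $a$-copresheaves over $a[A]$.

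Finally I would verify naturality in $p,q$ together with the associativity and unit coherence axioms. These reduce to the compatibility of the canonical maps $\times_{a[A]}\hookrightarrow\times$ with iterated tensoring, which follows from the universal properties of products and fiber products in $a\set$; the two iterated inclusions on either side of the associativity axiom both present the triple fiber product inside the triple product. I expect the main obstacle to be bookkeeping rather than genuine difficulty: carefully tracking the $a$-copresheaf structure through the pullback computation and confirming that the inclusions are comodule morphisms which assemble coherently. The failure of strong monoidality is exactly legible here, since $\times_{a[A]}\hookrightarrow\times$ is a proper inclusion whenever $a[A]$ is not terminal, which is precisely the phenomenon noted in the remark preceding the statement.
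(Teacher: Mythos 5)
Your proposal is correct and follows essentially the same route as the paper's proof: the same pullback computation identifying $\Phi(p\otimes q)$ as having directions $F^\ast(x^\ast P_\ast)\times_{a[A]}F^\ast(y^\ast Q_\ast)$, the same lax structure maps given on directions (backwards) by the inclusion of the fiber product into the product of $a$-copresheaves, the same unit comparison via the map to the terminal copresheaf, and the same appeal to universal properties for the coherence axioms. Note only that what you write as the product $\times$ in $a\set$ is what the paper denotes $\times_{a(1)}$ at the level of direction sets; these coincide, so there is no discrepancy.
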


The failure of strong monoidality here arises from the fact that in the category $a\set$, the fibers of a product of morphisms are given not by products of fibers but by fiber products.

\begin{proof}
After unwinding the definitions we can see that for polynomials $P_\ast \to P$ and $Q_\ast \to Q$ in $\E$,
\[
\sum_{A \in a(1)}\sum_{x \colon F(A) \to P \times Q} \yon^{F^\ast\left(x^\ast(P_\ast \times Q_\ast)\right)}
\]
\[
\cong \sum_{A \in a(1)}\sum_{\substack{x_1 \colon F(A) \to P \\ x_2 \colon F(A) \to Q}} \yon^{F^\ast\left(x_1^\ast(P_\ast) \times_{F(A)} x_2^\ast(Q_\ast)\right)}
\]
\[
\cong \sum_{A \in a(1)}\sum_{\substack{x_1 \colon F(A) \to P \\ x_2 \colon F(A) \to Q}} \yon^{F^\ast\left(x_1^\ast(P_\ast)\right) \times_{a[A]} F^\ast\left(x_2^\ast(Q_\ast)\right)}
\]
by the fact that the pullback of a morphism into a product is the pullback of the pullbacks of the component morphisms, and the functor $F^\ast \colon \E \to a\set$ preserves pullbacks. Here $a[A]$ denotes the corepresentable $a$-copresheaf, which agrees with $F^\ast F(A)$ as $F$ is fully faithful.

This $(a,a)$-bicomodule has a morphism from the corresponding tensor product of $(a,a)$-bicomodules
\[
\left(\sum_{A \in a(1)}\sum_{x_1 \colon F(A) \to P} \yon^{F^\ast(x_1^\ast P_\ast)}\right) \ot{a}{a} \left(\sum_{A \in a(1)}\sum_{x_2 \colon F(A) \to Q} \yon^{F^\ast(x_2^\ast Q_\ast)}\right)
\]
\[
= \sum_{A \in a(1)}\sum_{\substack{x_1 \colon F(A) \to P \\ x_2 \colon F(A) \to Q}} \yon^{F^\ast\left(x_1^\ast(P_\ast)\right) \times_{a(1)} F^\ast\left(x_2^\ast(Q_\ast)\right)}
\]
induced by the inclusion from a fiber product into a product of $a$-copresheaves on the directions.

The functor sends the unit $\yon \colon 1 = 1$ to the $(a,a)$-bicomodule
\[
\sum_{A \in a(1)} \sum_{F(A) \to 1} \yon^{F^\ast F(A)} \cong \sum_{A \in a(1)} \sum_{F(A) \to 1} \yon^{a[A]},
\]
which likewise admits a map from the unit $a(1)\yon^{a(1)}$ of the monoidal structure on $(a,a)$-bicomodules, induced by the unique map $a[A] \to a(1)$ to the terminal $a$-copresheaf on directions.

The unit and associativity equations are then straightforward to deduce from the universal property of products.
\end{proof}


%

\chapter{Open dynamics and computational effects in $\ccatsharp$}

Algebraic effects and effect handlers are a popular way of working with side effects in functional programming languages and they have received much research and development in the last decade, both via new languages and integration into current functional languages like OCaml, Haskell, and Scala \cite{leijen_koka_2014, bauer_programming_2015, ocaml_ocaml_2022, king_7942_2022, odersky_capabilities_2022}.

The idea is that instead of directly implementing side effects, an effectful program should instead signal that a side effect should be performed, and another program should ``handle'' that signal, afterwards returning control flow to the original program along with the result of that effect. The advantage of this is that side effects can be handled in different ways. For instance, the side-effect of accessing a database could be implemented with an in-memory database, an on-file database, a dummy database, or a database distributed across the entire world. The application logic should be indifferent to this implementation.

We can model a program that uses effects as a polynomial coalgebra, i.e. a set of ``states'' $S$ along with a function $\vartheta \colon S \to p(S)$ for some polynomial $p$. The positions $I \in p(1)$ represent the different effects that can be ``thrown'', and then the directions $x \in p[I]$ represent the possible results of that effect returned to the program. Given a state $s \in S$, $\vartheta(s)$ represents running the program until the next effect is thrown, and then returning that effect $I$ along with a continuation function $p[I] \to S$ saying what the next state is given the result of the effect. A position $I \in p(1)$ with $p[I] = \emptyset$ signals termination of the program.

An effects handler is then something which ``migrates'' a $p$-coalgebra to a $q$-coalgebra. For instance, this could translate abstract database accesses into UNIX system calls to the network stack. 

\begin{example}
We can represent effect types in a language like Haskell using a GADT (generalized algebraic data type) with a single type parameter that looks something like the following code.
\begin{lstlisting}[language=HaskellMin]
data DBQuery a where
  Search :: String -> DBQuery [Id]
  Retrieve :: Id -> DBQuery Record
\end{lstlisting}
This represents an API with two operations. The first operation allows you to search based on a string and returns a list of ids that match the query. The second operation allows you to retrieve the full record for a given id.

Mathematically, this is a presentation of the polynomial functor
\[ p = \mathtt{String}\; y^{\mathbf{List}(\mathtt{Id})} + \mathtt{Id}\; y^{\mathtt{Record}}. \]
Then a coalgebra for \texttt{DBQuery} would be a type \texttt{s} along with a function of type

\begin{lstlisting}[language=HaskellMin]
s -> (exists a. (DBQuery a, a -> s))
\end{lstlisting}

	For \texttt{DBQuery}, the only options for \texttt{a} in the above are \texttt{[Id]} or \texttt{Record}; in general \texttt{a} ranges over the possible return types for an effect.
\end{example}

Mathematically, the abstract form of an effects handler can be modeled in the language of polynomials. As we show in \cref{sec.effects}, this allows for the construction of a pseudo-double category $\eff$ whose horizontal morphisms are effects handlers along with a pseudo-double functor $\eff \to \ccatsharp$. This nearly faithful mapping of effects handlers into $\ccatsharp$ is interesting both in its own right for exhibiting $\ccatsharp$ as a setting in which to work with effectful functional programs, and as a factor in a locally fully faithful pseudo-double functor $\org \to \ccatsharp$. Here, $\org$ is the pseudo-double category described in \cite{spivak2021learners,shapiro2022dynamic} whose horizontal morphisms are polynomial coalgebras, providing an elegant polynomial-based setting for modeling discrete open dynamical systems. This composite result shows that even for working with dynamics in $\org$ (which the authors have sometimes called ``the other'' pseudo-double category of interest in the theory of polynomial functors) it suffices to consider only $\ccatsharp$.

A key ingredient in these comparisons is the construction of the \emph{cofree comonoid} $\cofree p$ from a polynomial $p$, so we begin by providing the construction of $\cofree p$ and proving that it is indeed a cofree comonoid.

\section{Cofree comonoids}

Much like the construction of free monoids, which are constructed using colimits in a manner left adjoint to a forgetful functor, \emph{cofree comonoids} are dually constructed using limits in a manner right adjoint to a forgetful functor. 

\begin{definition}
Given a polynomial $p$, we define polynomials $p^{(i)}$ for $i\in\nn$ by
\[
  p\coh{0}\coloneqq\yon
  \qqand
  p\coh{1+i}\coloneqq\yon\times\left(p\tri p\coh{i}\right)
\]
There is a projection map $\pi\coh{0}\colon p\coh{1}\to p\coh{0}$, and if $\pi\coh{i}\colon p\coh{1+i}\to p\coh{i}$ has been defined, then we can define $\pi\coh{1+i}\coloneqq \yon\times(p\tri\pi\coh{i})$. Now define the polynomial
\begin{equation}\label{eqn.construct_cofree}
\cofree p\coloneqq\lim\left(\cdots\To{\pi\coh{2}}p\coh{2}\To{\pi\coh{1}}p\coh{1}\To{\pi\coh{0}}p\coh{0}\right)
\end{equation}
and we note that this construction $p\mapsto \cofree p$ extends to a functor $\cofree - \colon \poly \to \poly$.
\end{definition}

Given this definition of $\cofree p$, in order to treat it as the cofree comonoid on $p$ it remains to show that it is in fact a comonoid, and that it has the desired universal property which can be expressed using an adjunction.

\begin{proposition}\label{prop.cofree}
The polynomial $\cofree p$ has the structure of a $\tri$-comonoid for each $p:\poly$,
\[
\cofree p\to\yon
\qqand
\cofree p\to\cofree p\tri\cofree p.
\qedhere
\]
\end{proposition}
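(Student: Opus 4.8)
The plan is to recognize $\cofree p$ as the \emph{cofree comonoid} on $p$ for the monoidal structure $(\yon,\tri)$, equivalently the cofree category on the polynomial $p$, whose positions are the (possibly infinite) ``$p$-trees'' and whose directions at a tree are its nodes. I would construct it as the terminal coalgebra of the endofunctor $\Phi_p\colon\poly\to\poly$ defined by $\Phi_p(q)\coloneq\yon\times(p\tri q)$, which is the comonoid dual of the standard free-monoid construction $\mu q.\,(\yon+p\tri q)$. Concretely, set $\cofree p\coloneq\lim_n q_n$ for the final sequence $q_0\coloneq\yon$ and $q_{n+1}\coloneq\Phi_p(q_n)$, with connecting maps $q_{n+1}\to q_n$ obtained by applying $\Phi_p$ to $q_n\to q_{n-1}$, starting from the projection $q_1=\yon\times(p\tri\yon)\to\yon=q_0$. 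Functoriality of $\cofree{-}$ is then immediate: a morphism $p\to p'$ induces a map of towers through functoriality of $\tri$ and $\times$, hence a map $\cofree p\to\cofree{p'}$ on limits.

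The essential input is that $p\tri-$ preserves connected limits (the same fact used in \cref{defcatsharp} to build the composition of bicomodules); since $\yon\times-$ likewise preserves them and the final sequence is a cofiltered, hence connected, limit, the functor $\Phi_p$ preserves the limit defining $\cofree p$. This yields the Lambek isomorphism $\cofree p\cong\Phi_p(\cofree p)=\yon\times(p\tri\cofree p)$ exhibiting $\cofree p$ as the terminal $\Phi_p$-coalgebra. The counit $\cofree p\to\yon$ is the stage-$0$ leg of the limit cone, which on trees collapses a tree to its root and sends the unique direction of $\yon$ to the empty path (the identity morphism of the cofree category). The comultiplication $\cofree p\to\cofree p\tri\cofree p$ is the subtree-extraction map: on positions it sends a tree $t$ to $t$ decorated, at each node, by the subtree of $t$ rooted there, and on directions it realizes concatenation of paths. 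I would produce $\delta$ from the universal property, by equipping $\cofree p\tri\cofree p$ with a $\Phi_p$-coalgebra structure (via the Lambek isomorphism on the outer factor together with the counit) and taking the unique coalgebra map into the terminal coalgebra; equivalently, one invokes the general fact that the terminal coalgebra of $\yon\times(p\tri-)$ underlies the cofree comonoid, so that the counit, comultiplication, and their laws are packaged by the right adjoint to the forgetful functor $\comon(\poly)\to\poly$.

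The main obstacle is exactly the interaction of $\tri$ with the limit: because $\tri$ is neither continuous nor cocontinuous in general, one must use precisely that $p\tri-$ preserves connected limits to obtain the Lambek isomorphism and to guarantee that $\delta$ lands in $\cofree p\tri\cofree p$ rather than in some larger polynomial. Granting this, counitality and coassociativity follow formally from the uniqueness of morphisms into a terminal coalgebra (equivalently, from universality of the cofree adjunction), and the explicit tree description of positions and directions is available as an alternative route to verify the comonoid identities by hand.
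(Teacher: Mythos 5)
Your construction of the carrier agrees with the paper's: the same tower $q_0=\yon$, $q_{n+1}=\yon\times(p\tri q_n)$, the same cofiltered limit, the same stage-zero projection as counit, and the same observation that functoriality comes from functoriality of the tower. The genuine gap is in the key step, the comultiplication. Terminality of $\cofree p$ characterizes maps \emph{into} $\cofree p$: if you equip $\cofree p\tri\cofree p$ with a $\Phi_p$-coalgebra structure, the universal property hands you a unique coalgebra map $\cofree p\tri\cofree p\to\cofree p$, i.e.\ a map in the direction of a multiplication, not the required $\delta\colon\cofree p\to\cofree p\tri\cofree p$. Nothing in the terminal-coalgebra property controls maps \emph{out} of $\cofree p$, so your recipe as stated does not produce $\delta$; for the same reason ``coassociativity by uniqueness of maps into the terminal coalgebra'' does not apply, since coassociativity is an equation between maps into $\cofree p\tri\cofree p\tri\cofree p$, which is not the terminal $\Phi_p$-coalgebra. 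Your fallback---invoking ``the general fact'' that the terminal coalgebra of $\yon\times(p\tri-)$ underlies the cofree comonoid---begs the question: that fact is precisely what \cref{prop.cofree} (together with \cref{thm.cofree_comonad_comonad}) is establishing, so within this paper it cannot be cited as known.

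The argument can be repaired in your spirit, but it needs a further idea: using that $-\tri\cofree p$ preserves products and that $\tri$ preserves connected limits, one gets $\cofree p\tri\cofree p\cong\cofree p\times(p\tri\cofree p\tri\cofree p)$, and by rerunning the final-sequence argument one shows $\cofree p\tri\cofree p$ is the terminal coalgebra of the \emph{parameterized} endofunctor $q\mapsto\cofree p\times(p\tri q)$; then $\delta$ is the unique coalgebra map out of $\cofree p$ equipped with the structure $(\id,\xi_2)$, where $\xi_2\colon\cofree p\to p\tri\cofree p$ is the second component of the Lambek isomorphism, and counitality/coassociativity require analogous terminality statements for $\cofree p\tri\cofree p\tri\cofree p$. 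The paper avoids this machinery: it uses connected-limit preservation only to write $\cofree p\tri\cofree p\cong\lim_{i_1,i_2}\bigl(p^{(i_1)}\tri p^{(i_2)}\bigr)$, then constructs $\delta$ from an explicit, inductively defined, natural system of maps $\varphi_{i_1,i_2}\colon p^{(i_1+i_2)}\to p^{(i_1)}\tri p^{(i_2)}$, and verifies unitality from the base cases and coassociativity by induction on the indices. Either route can be made to work, but the inductive system of maps (or, alternatively, the parameterized terminal coalgebra) is exactly the content missing from your proposal.
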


\begin{proof}
The polynomial $\cofree p$ comes equipped with a counit $\epsilon\colon\cofree p\to\yon=p\coh{0}$ given by the projection. We next construct the comultiplication $\delta\colon\cofree p\to\cofree p\tri\cofree p$. Since $\tri$ commutes with connected limits, we have
\[
  \cofree p\tri\cofree p=
  \left(\lim_{i_1}p\coh{i_1}\right)\tri\left(\lim_{i_2}p\coh{i_2}\right)\cong
  \lim_{i_1,i_2}\left(p\coh{i_1}\tri p\coh{i_2}\right)
\]
To obtain the comultiplication $\lim_ip\coh{i}\to\lim_{i_1,i_2}(p\coh{i_1}\tri p\coh{i_2})$, it suffices to produce a natural choice of polynomial map $\varphi_{i_1,i_2}\colon p\coh{i_1+i_2}\to p\coh{i_1}\tri p\coh{i_2}$ for any $i_1,i_2:\nn$. When $i_1=0$ or $i_2=0$, we use the unit identity for $\tri$. By induction, assume given $\varphi_{i_1,1+i_2}$; we construct $\varphi_{1+i_1,1+i_2}$ as follows:
\begin{align}
\nonumber
  p\coh{1+i_1+1+i_2}&=
  \yon\times \left(p\tri p\coh{i_1+1+i_2}\right)\\&\to
\label{eqn.induction}
  \yon\times \left(p\tri p\coh{i_1}\tri p\coh{1+i_2}\right)\\&\to
\label{eqn.special}
  \left(\yon\times p\tri p\coh{i_1}\right)\tri p\coh{1+i_2}\\&=
\nonumber
  p\coh{1+i_1}\tri p\coh{1+i_2}
\end{align}
where \eqref{eqn.induction} is $\varphi_{i_1,1+i_2}$ and it remains to construct \eqref{eqn.special}. Recall that $-\tri q$ preserves products for any $q$, so constructing \eqref{eqn.special} is equivalent to constructing two maps
\[
\yon\times \left(p\tri p\coh{i_1}\tri p\coh{1+i_2}\right)\To{\phi\coh{i_1,i_2}} p\coh{1+i_2}
\qqand
\yon\times \left(p\tri p\coh{i_1}\tri p\coh{1+i_2}\right)\to p\tri p\coh{i_1}\tri p\coh{1+i_2}.
\]
For the latter we use the second projection. The former, $\phi\coh{i_1,i_2}\colon p\coh{1+i_1+1+i_2}\to p\coh{1+i_2}$, is the more interesting one; for it we also use projections $p\coh{i_1}\to p\coh{0}=\yon$ and $\pi\coh{i_2}\colon p\coh{i_2+1}\to p\coh{i_2}$ to obtain:
\[
\yon\times \left(p\tri p\coh{i_1}\tri p\coh{1+i_2}\right)\to
\yon\times \left(p\tri\yon\tri p\coh{i_2}\right)\cong p\coh{1+i_2}
\]
We leave the naturality of this to the reader.

It remains to check that $\epsilon$ and $\delta$ satisfy counitality and coassociativity. The base cases above imply counitality. Proving coassociativity amounts to proving that the following diagram commutes:
\[
\begin{tikzcd}[column sep=50pt]
	p\coh{1+i_1+1+i_2+1+i_3}\ar[r, "\phi\coh{i_1,i_2+1+i_3}"]\ar[d, "\phi\coh{i_1+1+i_2,i_3}"']&
	p\coh{1+i_2+1+i_3}\ar[d, "\phi\coh{i_2,i_3}"]\\
	p\coh{1+i_3}\ar[r,equal]&p\coh{1+i_3}
\end{tikzcd}
\]
This can be shown by induction on $i_3$.
\end{proof}

\begin{theorem}\label{thm.cofree_comonad_comonad}
There is an adjunction
\[
\adj{\catsharp}{U}{\cofree{-}}{\poly}
\]
where $U\colon\catsharp\to\poly$ is the forgetful functor $U(c,\epsilon,\delta)\coloneqq c$.
\end{theorem}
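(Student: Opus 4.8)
The plan is to prove the adjunction by establishing the universal property of the cofree comonoid directly. The counit at a polynomial $p$ will be the map $u_p\colon\cofree p\to p$ obtained by composing the limit projection $\mathrm{pr}_1\colon\cofree p\to p\coh 1=\yon\times p$ (using the identification $p\coh 1=\yon\times(p\tri\yon)\cong\yon\times p$) with the second product projection $\yon\times p\to p$. I will show $u_p$ is universal among comonoid maps targeting $p$: for a comonoid $(c,\epsilon,\delta)$ and a polynomial map $g\colon Uc\to p$, there is a unique comonoid homomorphism $\hat g\colon c\to\cofree p$ with $u_p\circ\hat g=g$, and the resulting bijection is natural.

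First I would build $\hat g$ as a cone over the tower defining $\cofree p=\lim_i p\coh i$. Define maps $g\coh i\colon c\to p\coh i$ by recursion: $g\coh 0\coloneqq\epsilon$ and $g\coh{1+i}\coloneqq\langle\epsilon,(g\tri g\coh i)\circ\delta\rangle\colon c\to\yon\times(p\tri p\coh i)=p\coh{1+i}$. A short induction on $i$, matching the inductive definition of $\pi\coh i$ and using functoriality of $\tri$, shows $\pi\coh i\circ g\coh{1+i}=g\coh i$, so the $g\coh i$ assemble into $\hat g\colon c\to\cofree p$; and $u_p\circ\hat g=g$ follows from the right counit law for $c$ applied to the $i=1$ component, since $\mathrm{pr}_2\circ g\coh 1=(g\tri\epsilon)\circ\delta\cong g$.

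The main obstacle is verifying that $\hat g$ is a comonoid homomorphism. Compatibility with counits is immediate, since the counit of $\cofree p$ is the projection to $p\coh 0=\yon$ and $g\coh 0=\epsilon$. Compatibility with comultiplication amounts to the equality $\delta_{\cofree p}\circ\hat g=(\hat g\tri\hat g)\circ\delta$ of maps $c\to\cofree p\tri\cofree p$. Because $\tri$ preserves the connected limits defining $\cofree p$, so that $\cofree p\tri\cofree p\cong\lim_{i_1,i_2}(p\coh{i_1}\tri p\coh{i_2})$, it suffices to check this after projecting to each factor $p\coh{i_1}\tri p\coh{i_2}$, where it reduces to the key identity
\[
\varphi_{i_1,i_2}\circ g\coh{i_1+i_2}=(g\coh{i_1}\tri g\coh{i_2})\circ\delta,
\]
with $\varphi_{i_1,i_2}$ the structure maps used to build $\delta_{\cofree p}$ in \cref{prop.cofree}. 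I expect to prove this by induction on $i_1$ for fixed $i_2$, substituting the inductive definitions of $\varphi_{1+i_1,1+i_2}$ and of $g\coh{1+i}$ and invoking coassociativity of $\delta$; the base cases $i_1=0$ or $i_2=0$ follow from counitality of $\delta$ together with the unit laws for $\tri$. This inductive bookkeeping, which mirrors the construction of $\varphi$, is the technical heart of the argument.

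Finally I would settle uniqueness and naturality. For uniqueness, any comonoid homomorphism $f\colon c\to\cofree p$ with $u_p\circ f=g$ has components $\mathrm{pr}_i\circ f$ satisfying the same recursion as $g\coh i$—forced because $f$ commutes with counit and comultiplication—so $f=\hat g$. Naturality of the resulting bijection $\Hom_{\catsharp}(c,\cofree p)\cong\Hom_{\poly}(Uc,p)$ in $c$ and in $p$ follows by chasing limit components, using naturality of $u_{(-)}$ (from the naturality of the construction in \cref{prop.cofree}) and of $g\mapsto\hat g$. This exhibits $U\dashv\cofree{-}$ with counit $u_p$ and unit $\widehat{\id_{Uc}}$, and in particular upgrades $\cofree{-}$ to a functor $\poly\to\catsharp$.
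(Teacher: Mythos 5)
Your proposal is correct, and its core constructions coincide with the paper's: your counit $u_p$ is exactly the paper's $\epsilon_p$, and your cone $g\coh{0}=\epsilon$, $g\coh{1+i}=\langle\epsilon,(g\tri g\coh{i})\circ\delta\rangle$ specializes at $g=\id_{Uc}$ to the paper's unit $\eta_c$, whose comonoid-homomorphism property the paper checks by the same counitality-plus-coassociativity induction against the maps $\varphi_{i_1,i_2}$ that you identify as the technical heart. Where you genuinely diverge is in how the adjunction is assembled: the paper uses the unit--counit formulation and finishes by verifying the two triangle identities (the second of which requires a componentwise limit computation in $\cofree{\cofree{p}}$), whereas you prove that $(\cofree{p},u_p)$ is a universal arrow from $U$ to $p$ -- existence plus uniqueness of the factorization $\hat g$ -- and invoke the universal-arrow characterization of adjunctions. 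Your route buys something the paper leaves implicit: functoriality of $\cofree{-}$ as a functor into $\catsharp$ (rather than merely into $\poly$, which is all \cref{prop.cofree} asserts) and naturality of the bijection come for free, whereas the paper tacitly needs that $\cofree{f}$ is a comonoid homomorphism for every $f\colon p\to q$. The price is your uniqueness step, which is thinner than it looks: that the components $\mathrm{pr}_i\circ f$ ``satisfy the same recursion'' is not automatic from $f$ commuting with counit and comultiplication; one must first identify $\varphi_{1,i}\colon p\coh{1+i}\to p\coh{1}\tri p\coh{i}\cong p\coh{i}\times(p\tri p\coh{i})$ concretely as the pair (tower projection, product projection), so that projecting $\delta_{\cofree{p}}\circ f=(f\tri f)\circ\delta_c$ onto the $(1,i)$-component forces $\mathrm{pr}_2\circ f\coh{1+i}=(g\tri f\coh{i})\circ\delta$. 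That computation does go through and is comparable in weight to the paper's second triangle identity, so this is a gap in exposition rather than a flaw in the approach.
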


\begin{proof}
We will abuse notation and denote the comonoid $(c,\epsilon,\delta):\catsharp$ simply by its carrier $c$. We first provide the counit and unit of the desired adjunction. The counit
\[
\epsilon_p\colon\cofree{p}\to p
\]
is given by composing the projection map $\cofree{p}\to p\coh{1}$ from construction \eqref{eqn.construct_cofree} with the projection $p\coh{1}\cong\yon\times p\to p$. Since $\cofree{c}$ is defined as a limit, the unit
\[
\eta_c\colon c\coto\cofree{c}
\]
will be given by defining maps $\eta\coh{i}\colon c\to c\coh{i}$ commuting with the projections $\pi\coh{i}\colon c\coh{1+i}\to c\coh{i}$, for each $i:\nn$, and then showing that the resulting polynomial map $\eta_c$ is indeed a cofunctor. Noting that $c\coh{0}=\yon$, we define
\[
\eta\coh{0}\coloneqq\epsilon
\]
Given $\eta\coh{i}\colon c\to c\coh{i}$, we define $\eta\coh{1+i}$ as the composite
\[
c\To{(\epsilon,\delta)}\yon\times(c\tri c)\To{\yon\times(c\tri\eta\coh{i})}\yon\times\left(c\tri c\coh{i}\right)=c\coh{1+i}.
\]
Clearly, we have $\eta\coh{0}=\pi\coh{0}\circ\eta\coh{1}$. It is easy to check that if $\eta\coh{i}=\pi\coh{i}\circ\eta\coh{1+i}$ then $\eta\coh{1+i}=\pi\coh{1+i}\circ\eta\coh{2+i}$. Thus we have constructed a polynomial map $\eta\colon c\to \cofree{c}$. It clearly commutes with the counit, so it suffices to show that $\eta$ commutes with the comultiplication, which amounts to showing that the following diagram commutes
\[
\begin{tikzcd}
  c\ar[r, "\delta"]\ar[d, "{(\epsilon,\delta)}"']&[27pt]
  c\tri c\ar[r, "{(\epsilon,\delta)\tri(\epsilon,\delta)}"]&[15pt]
  \big(\yon\times(c\tri c)\big)\tri\big(\yon\times(c\tri c)\big)\ar[d, "{(\yon\times c\tri\eta\coh{i_1})\tri(\yon\times c\tri\eta\coh{i_2})}"]\\
  \yon\times(c\tri c)\ar[r, "{\yon\times c\tri\eta\coh{i_1+1+i_2}}"']&
  \yon\times\left(c\tri c\coh{i_1+1+i_2}\right)\ar[r, "\varphi_{1+i_1,1+i_2}"']&
  \left(\yon\times(c\tri c\coh{i_1})\right)\tri\left(\yon\times(c\tri c\coh{i_2})\right)
\end{tikzcd}
\]
for all $i_1,i_2:\nn$, where $\varphi_{1+i_1,1+i_2}$ is the map constructed in \cref{eqn.induction,eqn.special}. Commutativity follows from the counitality and coassociativity of the comonoid $c$.

The triangle identities are straightforward as well. Indeed, for any comonoid $c:\catsharp$, the composite $c\To{U\circ\eta_c} \cofree{c}\To{\epsilon_{Uc}} c$ is equal to the composite of $c\To{(\epsilon,c)}c\coh{1}=\yon\times c$, with the projection $c\coh{1}\to c$, the result of which is the identity. Finally, for any polynomial $p:\poly$, the composite $\cofree{p}\To{\eta_{\cofree{p}}}\cofree{\cofree{p}}\To{\cofree{\epsilon_p}}\cofree{p}$ is given by taking a limit of maps of the form
\[
	\cofree{p}\To{(\epsilon,\delta)}
	\yon\times\left(\cofree{p}\tri\cofree{p}\right)\To{\yon\times\left(\cofree{p}\tri\eta\coh{i}\right)}
	\yon\times\left(\cofree{p}\tri\cofree{p}\coh{i}\right)\To{\yon\times\left(\epsilon_p\tri\epsilon_p\coh{i}\right)}
	\yon\times\left(p\tri p\coh{i}\right)
\]
Each one is in fact the projection $\cofree{p}\to p\coh{i+1}$, so the resulting map is the identity on $\cofree{p}$, completing the proof.
\end{proof}

We note an interesting subcategory of $\ccatsharp$ given by applying the cofree construction to spans of polynomials. Noting that $\poly$ has all limits, let $\sspan(\poly)$ denote the double category of polynomials, morphisms, and spans $p\from s\to q$ between them, and let $\sspan^\Cat{c}(\poly)$ denote the subcategory with the same objects and vertical morphisms, but for which a horizontal morphism is a span
\[
p\From{\cart}s\to q
\]
whose left leg is cartesian. These compose because the pullback of a cartesian map is cartesian.%
\footnote{We observe in passing that a monad in $\sspan^\Cat{c}(\poly)$ can be identified with a category equipped with a presheaf.}

\begin{proposition}
The functor $\cofree-\colon\poly\to\catsharp$ extends to a strong double functor
\[
	\cofree-\colon\sspan^\Cat{c}(\poly)\to\ccatsharp.
\]
In particular, it sends a span $p\From{\cart} s\to q$ to a bicomodule of the form
\[
\cofree p\bifrom[\cofree s]\cofree q
\]
and both identity and composition are preserved up to isomorphism.
\end{proposition}
\begin{proof}
In fact, there is a colax double functor $\sspan(\poly)\to\ccatsharp$, though we will not prove it here because we find preservation of composition more interesting. Given a span $p\From{\varphi} s\To{\psi} q$, the corresponding bicomodule has structure morphisms given by composites as shown:
\[
	\cofree p\tri\cofree s\From{\varphi\tri\cofree s}
	\cofree s\tri \cofree s\From{\delta}
	\cofree s\To{\delta}
	\cofree s\tri \cofree s\To{\cofree s\tri\psi}
	\cofree s\tri\cofree q
\]
It is clear that this mapping preserves identities. Suppose given composable spans
\[
p\From{\cart} s\to q\From{\cart} t\to r
\]
for which both left legs are cartesian, and let $p\from (s\times_q t)\to r$ be the composite. We will be done if we can show that there is an isomorphism of bicomodules
\[
\begin{tikzcd}[row sep=5pt]
	&\cofree q\ar[dr, bimr-biml, bend left=20pt, "\cofree t"]\\
	\cofree p\ar[ur, bimr-biml, bend left=20pt, "\cofree s"]\ar[rr, bimr-biml, bend right=10pt, "\cofree{s\times_qt}"']&\ar[u, equal]&
	\cofree r
\end{tikzcd}
\]
In other words, we need to show that the following is an equalizer diagram:
\begin{equation}\label{eqn.kappa}
\begin{tikzcd}[row sep=0pt]
	&&&
	\cofree s\tri\cofree s\tri\cofree t\ar[rd]\\
	\cofree{s\times_qt}\ar[r, "\delta"]&
	\cofree{s\times_qt}\tri\cofree{s\times_qt}\ar[r]&
	\cofree s\tri\cofree t\ar[ur]\ar[dr]&&
	\cofree s\tri\cofree q\tri \cofree t\\&&&
	\cofree s\tri\cofree t\tri\cofree t\ar[ru]
\end{tikzcd}
\end{equation}
We first check that the two composites are equal. To see this, we embed the above diagram in a larger one
\[
\begin{tikzcd}[row sep=5pt]
	&&
	\cofree{s\times_qt}\tri\cofree{s\times_qt}\tri\cofree{s\times_qt}\ar[r]&
	\cofree s\tri\cofree s\tri\cofree t\ar[rd]\\
	\cofree{s\times_qt}\ar[r, "\delta"]&
	\cofree{s\times_qt}\tri\cofree{s\times_qt}\ar[r]\ar[ur]&
	\cofree s\tri\cofree t\ar[ur]\ar[dr]&&
	\cofree s\tri\cofree q\tri \cofree t\\&&&
	\cofree s\tri\cofree t\tri\cofree t\ar[ru]\ar[from=uul, crossing over]
\end{tikzcd}
\]
Because all three new squares commute, the required composites are indeed equal. 

Now suppose given a polynomial $x$ and a map $\alpha\colon x\to\cofree s\tri\cofree t$ making the composites commute; we need to provide a unique map to $\cofree{s\times_qt}$. Note that $a\tri-\tri b$ preserves all connected limits---in particular pullbacks---and that $\cofree-$ also preserves pullbacks. Thus we have an induced map $x\to\cofree s\tri\cofree{s\times_qt}\tri\cofree t$, which we can compose with the counits on $\cofree s$ and $\cofree t$ to obtain the desired map $x\to\cofree{s\times_qt}$. It is easy to check that composing it with $\cofree{s\times_qt}\to\cofree s\tri \cofree t$ returns $\alpha$ as necessary.

It remains to show that the map we obtained is unique, and for that it suffices to show that the map $\kappa\colon\cofree{s\times_qt}\to\cofree s\tri\cofree t$, as shown in \eqref{eqn.kappa}, is monic, i.e.\ that $\kappa_1$ is injective and that for each $I:\cofree{s\times_qt}(1)$, the function $\kappa^\sharp_I\colon(\cofree s\tri\cofree t)[\kappa_1I]\to\cofree{s\times_qt}[I]$ is surjective. This is where we bring in the fact that $t\to q$ is Cartesian; it implies that $s\times_qt\to s$ is also cartesian. Thus we can identify $I$ with a tuple $(S,T,f)$ where $S:s(1)$, $T:t(1)$, and $f\colon s[S]\to\cofree{s\times_qt}(1)$, since $(s\times_qt)[(S,T)]\cong s[S]$. If $\kappa_1(S,T,f)=\kappa_1(S',T',f')$, one check immediately that $S=S'$ and $T=T'$, and by induction that $f=f'$. And $\kappa^\sharp_I$ is given by first projection, which is surjective since $\cofree t$ at least has nonempty direction sets (it at least has identity morphisms). This completes the proof.
\end{proof}

The following is immediate, since maps $A\to B$ in $\smset$ are cartesian as maps in $\poly$.

\begin{corollary}
The double category $\sspan(\smset)$ embeds into $\ccatsharp$. 
\end{corollary}

Myers \cite[Section 3.5]{myers2023categorical} defines the double category $\mathbb{A}\Cat{rena}_{\smset/-}$ of dependent arenas, whose objects are polynomials, whose vertical maps are polynomial maps (there called \emph{lenses}), and whose horizontal maps are \emph{charts}. A chart between polynomials $p$ and $q$ is just a bundle map between the associated bundles, from $p_*(1)\to p(1)$ to $q_*(1)\to q(1)$. A 2-cell is just a map of spans. Thus the following corollary is again immediate.

\begin{corollary}
There is a double functor $\mathbb{A}\Cat{rena}_{\smset/-}\to\ccatsharp$.
\end{corollary}

\section{Effects handlers}\label{sec.effects}

\begin{definition}\label{def.eff}
For polynomial comonoids $c,d$, a \emph{$(c,d)$-effects handler} is a pair $(s,\varphi)$ where the \emph{carrier} $s$ is a polynomial and $\varphi$ is a morphism $c \tri s \From{\varphi} s \tri d$ which commutes with counits and comultiplications in the sense of \cref{eqn.eff}. We say it is \emph{linear} if $s=S\yon$ for some $S:\smset$.
\begin{equation}\label{eqn.eff}
\begin{tikzcd}
	c\tri s\ar[d, "\epsilon_c\tri s"']&
	s\tri d\ar[d, "s\tri\epsilon_d"]\ar[l, "\varphi"']\\
	s\ar[r, equal]&
	s
\end{tikzcd}
\hspace{.6in}
\begin{tikzcd}
	c\tri s\ar[d, "\delta_c\tri s"']&&
	s\tri d\ar[d, "s\tri\delta_d"]\ar[ll, "\varphi"']\\
	c\tri c\tri s&
	c\tri s\tri d\ar[l, "c\tri\varphi"]&
	s\tri d\tri d\ar[l, "\varphi\tri d"]
\end{tikzcd}
\end{equation}

For polynomials $p,q$, a \emph{$(p,q)$-effects handler} is a polynomial $s$ equipped with a morphism $p \tri s \From{\varphi} s \tri q$. We refer to these as \emph{elementary effects handlers}. 
\end{definition}

\begin{example}
	Let $p = y$, $q = y^\nn$, and $s = \nn y$. We interpret these as follows. The polynomial $q$ describes ``a single effect, with return type $\nn$''. It's a button that you can push, and you get a natural number when you push the button. The polynomial $p$ we interpret as a single effect with unit return type. You can push the button, but you always get the same result. Then $s$ represents a state machine with $\nn$ states. In each state, we are handling effects from precisely one $q$-coalgebra.

	We are going to describe an effects handler that implements the following (very dumb) game. There are two players, $q$-Bob and $p$-Bob. $q$-Bob asks for a natural number. $p$-Bob sees $q$-Bob's request and approves it. Then $q$-Bob gets a natural number. It is always one more than the last natural number he got.

	This is implemented by an elementary effects handler $p \tri s \From{\varphi} s \tri q$ in the following way. A position of $s \tri q$ is a pair $(n \in \nn, f \colon 1 \to 1)$. A position of $p \tri s$ is a pair $(x \in 1, u \colon 1 \to \nn)$. Both of these are just isomorphic to $\nn$, so we can say that the action of $\varphi$ on positions is just $\varphi(n) = n + 1$. The direction set at any position of $s \tri q$ is the natural numbers, and the direction set at any position of $p \tri s$ is the singleton. We then define the backwards direction by $\varphi_n^\sharp(\ast) = n$.
\end{example}

\begin{example}
	Let $\mathbf{Prog}$ be the set of programs in a given programming language, and let $p = \mathbf{Prog}\; y^2 + y$. Let $q = \{\blacksquare, \square\}\;y^{\{\leftarrow, \rightarrow, \blacksquare, \square\}}$. Then we can interpret a Turing machine with access to a Halting oracle as an elementary $(p,q)$-effects handler. At each step, the state machine controlling the Turing machine gets to read the current tape position, which is either $\blacksquare$ or $\square$. Then the Turing machine can either output a program and get back a yes-no answer, or just output a request to keep going. Finally, the Turing machine returns a new instruction in $\{\leftarrow, \rightarrow, \blacksquare, \square\}$ to the tape, which says to move the head left or right, or to write $\blacksquare$ or $\square$ to the current position.

	Of course, there is nothing here which says that the Turing machine has to be hooked up to a correct Halting oracle; it might be hooked up to something which is just returning yes or no based on a pseudo-random number generator. But that's the point: the description of the Turing machine itself should treat the oracle as ``external.''
\end{example}

\begin{definition}
The pseudo-double category $\eff$ has polynomial comonoids as objects, comonoid homomorphisms as vertical morphisms, and $(c,d)$-effects handlers as horizontal morphisms from $d$ to $c$. Given comonoid homomorphisms $c \to c'$ and $d \to d'$ we can define a square from a $(c,d)$-effects handler $s$ to a $(c',d')$-effects handler $s'$ as a morphism of polynomials $s \to s'$ which commutes with the effects handler structure maps as in \eqref{eqn.effectsquare}.
\begin{equation}\label{eqn.effectsquare}
\begin{tikzcd}
	c \tri s \dar & s \tri d \dar \lar \\
	c' \tri s' & s' \tri d' \lar
\end{tikzcd}
\end{equation}

For a comonoid $c$, its identity $(c,c)$-effects handler is given by the polynomial $c$ and the identity morphism $c \tri c \from c \tri c$, and any comonoid homomorphism $c \to c'$ induces a horizontal identity square between the identity $(c,c)$- and $(c',c')$-effects handlers. For a $(c,d)$-effects handler $s$ and a $(d,e)$-effects handler $t$, we get a $(c,e)$-effects handler $s \tri t$ given by the composite $c \tri s \tri t \from s \tri d \tri t \from s \tri t \tri e$. $\tri$ similarly defines a horizontal composition of squares, while the unitors and associators for horizontal composition are given by those of $\tri$.

There is similarly a pseudo-double category $\effel$ of elementary effects handlers without the comonoid structure on objects or morphisms.
\end{definition}

As one might expect, elementary effects handlers can be regarded as effects handlers. Given an elementary effects handler $p \tri s \from s \tri q$, we will construct an effects handler with the same carrier of the form $\cofree p \tri s \from s \tri \cofree q$ as part of a pseudo-double functor $\effel \to \eff$.

\begin{lemma}\label{lemma.eff_handler_to_cofree}
For a polynomial $p$ and a polynomial comonoid $c$, we can identify $(\cofree p,c)$-effects handlers with elementary $(p,c)$-effects handlers.
\end{lemma}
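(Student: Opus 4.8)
The plan is to exploit the presentation of $\cofree p$ as the limit \eqref{eqn.construct_cofree} together with the adjunction of \cref{thm.cofree_comonad_comonad}. Since $-\tri s$ preserves connected limits (\cref{prop.cofree}), we have $\cofree p\tri s\cong\lim_n\bigl(p\coh{n}\tri s\bigr)$, so a map $\varphi\colon s\tri c\to\cofree p\tri s$ is the same datum as a family $\varphi\coh{n}\colon s\tri c\to p\coh{n}\tri s$ commuting with the projections $\pi\coh{n}\tri s$. In the forward direction, given a $(\cofree p,c)$-effects handler $(s,\varphi)$ I would set $\psi\coloneqq(\epsilon_p\tri s)\circ\varphi$, where $\epsilon_p\colon\cofree p\to p$ is the counit of \cref{thm.cofree_comonad_comonad} (equivalently the composite $\cofree p\to p\coh{1}=\yon\times p\to p$). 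No coherence condition is required of the target, so $(s,\psi)$ is an elementary $(p,c)$-effects handler; this direction simply extracts the first level $\varphi\coh{1}$.

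Conversely, given an elementary handler $\psi\colon s\tri c\to p\tri s$, I would build the family $\varphi\coh{n}$ by induction, using the decomposition $p\coh{1+n}\tri s\cong s\times\bigl(p\tri p\coh{n}\tri s\bigr)$ that results from $-\tri s$ preserving products. Set $\varphi\coh{0}\coloneqq s\tri\epsilon$ and
\[
\varphi\coh{1+n}\coloneqq\bigl\langle\, s\tri\epsilon,\;(p\tri\varphi\coh{n})\circ(\psi\tri c)\circ(s\tri\delta)\,\bigr\rangle,
\]
where the comultiplication $\delta$ of $c$ supplies a fresh copy of $c$ for the recursive call, exactly mirroring ``running the handler once more.'' A short induction on $n$, following the definition of $\pi\coh{n}$, shows that this family commutes with the projections and hence assembles into a single map $\varphi\colon s\tri c\to\cofree p\tri s$. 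That this assignment is manifestly natural and carrier-preserving is what upgrades the resulting bijection on structure maps to the identification asserted in the lemma.

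It then remains to verify that $\varphi$ is coherent, i.e.\ that it satisfies the two diagrams \eqref{eqn.eff}, and that the two assignments are mutually inverse. The round trip $\psi\mapsto\varphi\mapsto(\epsilon_p\tri s)\circ\varphi$ returns $\psi$ by the right counit law $(c\tri\epsilon)\circ\delta=\id_c$, and the counit diagram for $\varphi$ is immediate from $\varphi\coh{0}=s\tri\epsilon$ and counitality of $c$. The main obstacle is the comultiplication law $(\delta_{\cofree p}\tri s)\circ\varphi=(\cofree p\tri\varphi)\circ(\varphi\tri c)\circ(s\tri\delta)$: under $\cofree p\tri\cofree p\cong\lim_{i,j}\bigl(p\coh{i}\tri p\coh{j}\bigr)$ this breaks up, for each $(i,j)$, into an identity relating $\varphi\coh{i+j}$ to the component $\varphi_{i,j}$ of $\delta_{\cofree p}$ constructed in \cref{prop.cofree}. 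I expect to prove these by induction on $i$, running along the very recursion \eqref{eqn.induction}--\eqref{eqn.special} that defines $\varphi_{i,j}$, with the coassociativity of $\delta$ entering at the inductive step. The same computation shows that coherence of an \emph{arbitrary} $\varphi$ forces the recursive formula above for $\varphi\coh{n}$ in terms of its first level $\psi=\varphi\coh{1}$, which completes the proof that the two assignments are mutually inverse.
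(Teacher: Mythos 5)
Your proposal is correct and follows essentially the same route as the paper's proof: extract the elementary handler by composing with the projection $\cofree p\to p$, and conversely build the levels $\varphi\coh{n}\colon s\tri c\to p\coh{n}\tri s$ by the same recursion (your pairing $\bigl\langle s\tri\epsilon,\;(p\tri\varphi\coh{n})\circ(\psi\tri c)\circ(s\tri\delta)\bigr\rangle$ is exactly the paper's composite \eqref{eqn.get_full_eff}), using that $-\tri s$ preserves the defining limit of $\cofree p$. Your treatment of the coherence conditions and the two round trips (counit law for one, coherence forcing the recursion for the other) also matches the paper's argument.
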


\begin{proof}
Given a $(\cofree p,c)$-effects handler $\cofree p\tri s\From{\psi} s\tri c$, one composes with the projection $\cofree p\to p$ to obtain an elementary $(p,c)$-effects handler. 

Going the other way, suppose we are given an elementary effects handler $p\tri s\From{\varphi}s\tri c$. This can be identified with a map
\[
\lens{s\tri c}{s}\to p.
\]
Since the lefthand side is a comonoid by \cref{lemma.selection}, the universal property of $\cofree{-}$ implies that the map factors uniquely through a cofunctor
\[
\lens{s\tri c}{s}\coto \cofree p\to p.
\]
We unfold the first factor as the associated effects handler, $\cofree p\tri s\from s\tri c$.

\end{proof}

\begin{corollary}\label{prop.elem_as_eff}
There is a double functor $\effel \to \eff$ extending the functor $\cofree -$ on the vertical category.
\end{corollary}

\begin{proof}
We first show that an elementary $(p,q)$-effects handler gives rise to a $(\cofree p, \cofree q)$-effects handler. Given a $(p,q)$-effects handler $p\tri s\From{\varphi}s\tri q$, we compose with the projection $s\tri q\From{\varphi\circ(s\tri\epsilon)} s\tri\cofree q$ to obtain an elementary $(p,\cofree q)$-effects handler. Then by \cref{lemma.eff_handler_to_cofree}, we can identify it with a $(\cofree p,\cofree q)$-effects handler, which we denote $\cofree p\tri s\From{\wt{\varphi}}s\tri\cofree q$. By naturality of the projection morphism, this construction extends to squares. It remains to show that it preserves identities and composites.

The identity elementary effects handler on $p$ is ``the identity'', $\yon\tri p\cong p\tri\yon$, and it is sent by the above construction to ``the identity'' $\yon\tri\cofree p\cong\cofree p\tri\yon$. For composition, suppose we are given $p\tri s\From{\varphi} s\tri q$ and $q\tri t\From{\psi} t\tri r$, and consider the following diagram:
\[
\begin{tikzcd}[row sep=48pt, column sep=60pt]
  \lens{s\tri\lens{t\tri \cofree r}{t}}{s}
  	\ar[r, "\lens{s\tri\wt{\psi}}{s}"]
		\ar[d, "\lens{s\tri\lens{t\tri \epsilon}{t}}{s}"']
		\ar[dr, pos=.4, "\lens{s\tri(\psi\circ(t\tri\epsilon))}{s}"]&[20pt]
	\lens{s\tri\cofree q}{s}\ar[r, "\wt{\varphi}"]\ar[d, "\lens{s\tri\epsilon}{s}"]&
	\cofree p\ar[d, "\epsilon"]\\
	\lens{s\tri\lens{t\tri r}{t}}{s}\ar[r, "\lens{s\tri\psi}{s}"']&
	\lens{s\tri q}{s}\ar[r, "\varphi"']&
	p
\end{tikzcd}
\]
The righthand square and the top-left triangle commute by construction of $\wt{\varphi}$ and $\wt{\psi}$, and the bottom-left triangle commutes by definition. The bottom composite unfolds to that of $\varphi\circ\psi$ as elementary effects handlers, whereas the top composite unfolds to that of $\wt{\varphi}\circ\wt{\psi}$ as effects handlers, and the vertical arrows construct the mapping between them and show that it preserves composition, completing the proof.
\end{proof}

\begin{lemma}\label{lemma.tri_d_compose_over_d}
For any bicomodules of the form $e\bifrom[q\tri d]d\bifrom[p]c$, where the right comodule structure $q\tri d\to q\tri d\tri d$ is given by $q\tri \delta$, the composite has the form
\[
\begin{tikzcd}[row sep=5pt]
	&d\ar[dr, bimr-biml, bend left=20pt, "p"]\\
	e\ar[ur, bimr-biml, bend left=20pt, "q\tri d"]\ar[rr, bimr-biml, bend right=10pt, "q\tri p"', "" name=comp]&\ar[u, equal]&
	c
\end{tikzcd}
\] 
 with structure maps given by the following composites:
\[
e\tri q\tri p\From{e\tri q\tri\epsilon\tri p} 
e\tri q\tri d\tri p\From {\lambda\tri p}
q\tri d\tri p \From{q\tri\lambda}
q\tri p\To{q\tri\rho}
q\tri p\tri c
\]
\end{lemma}
\begin{proof}
Composite bicomodules are given by an equalizer; in our case, we need to show that
 $q\tri p\to q\tri d\tri p\tto q\tri d\tri d\tri p$ is an equalizer. But equalizers are preserved by $\tri$ in either variable, so it suffices to show that the following is an equalizer:
\[
\begin{tikzcd}
	p\ar[r, "\lambda"]&
	d\tri p\ar[r, shift left=5pt, "d\tri\lambda"]\ar[r, shift right=5pt, "\delta\tri p"']&
	d\tri d\tri p
\end{tikzcd}
\]
By definition of left comodule the diagram commutes $\lambda\then(d\tri\lambda)=\lambda\then(\delta\tri p)$. Given a polynomial $x$ and a map $\varphi\colon x\to d\tri p$ such that $\varphi\then(d\tri\lambda)=\varphi\then(\delta\tri p)$, we obtain a map $x\To{\varphi} d\tri p\To{\epsilon\tri p} p$, and it is an easy calculation to show that postcomposing it with $\lambda$ returns $\varphi$, completing the proof.
\end{proof}

We now move on to the relationship between effects handlers and bicomodules: any $(c,d)$-effects handler $(s,\varphi)$ induces a $(c,d)$-bicomodule $c\bifrom[s\tri d]d$ with left and right structure maps $(\varphi\tri d)\circ(s\tri\delta_d)$ and $s\tri\delta_d$, as in \eqref{eqn.bicomod_for_eff}
\begin{equation}\label{eqn.bicomod_for_eff}
c \tri s \tri d \From{\varphi\tri d} s \tri d \tri d \From{s\tri\delta_d} s \tri d \To{s\tri\delta_d} s \tri d \tri d.
\end{equation}

\begin{theorem}\label{thm.eff}
The pseudo-double category of effects handlers admits a pseudo-double functor $\eff\to\ccatsharp$ which is the identity on objects and vertical morphisms and which is faithful on the category of horizontal morphisms between nonempty categories and squares between them. Moreover, every $(c,0)$- and $(c,\yon)$-bicomodule is in the essential image.
\end{theorem}

\begin{proof}
We first show that for any $(c,d)$-effects handler $(s,\varphi)$, the structure maps from \eqref{eqn.bicomod_for_eff} do in fact form a bicomodule $c\bifrom[s\tri d]d$. It is easy to check that $s\tri d\To{s\tri\delta}s\tri d\tri d$ is a right comodule. To check that the left structure maps commutes with counit, we have the following
\[
\begin{tikzcd}
	s\tri d\ar[r, "s\tri\delta"]\ar[dr, equal]&
	s\tri d\tri d\ar[r, "\varphi\tri d"]\ar[d, "s\tri\epsilon\tri d"]&
	c\tri s\tri d\ar[d, "\epsilon\tri s\tri d"]\\&
	s\tri d\ar[r, equal]&
	s\tri d
\end{tikzcd}
\]
where the triangle commutes because $d$ is a comonad, and the square commutes by \eqref{eqn.eff}. Checking that the left structure commutes with comultiplication is similar, and the compatibility between left and right structures is even easier.

A square in the double category $\eff$ is a map $\gamma\colon s\to s'$ and a commuting square
\[
\begin{tikzcd}
	c\tri s\ar[d, "\alpha\tri\gamma"']& 
	s\tri d\ar[d, "\gamma\tri\beta"]\ar[l, "\varphi"']\\
	c'\tri s'&
	s'\tri d'\ar[l, "\varphi'"]
\end{tikzcd}
\]
for cofunctors (comonoid homomorphisms) $\alpha\colon c\to c'$ and $\beta\colon d\to d'$. This gives rise to a square in $\ccatsharp$:
\[
\begin{tikzcd}
	c\ar[d, "\alpha"']\ar[r, bimr-biml, "s\tri d", ""' name=sd]&
	d\ar[d, "\beta"]\\
	c'\ar[r, bimr-biml, "s'\tri d'"', "" name=s'd']&
	d'\ar[from=sd, to=s'd', shorten=2mm, Rightarrow]
\end{tikzcd}
\]
Indeed, squares of this form are in bijection with $(c',d')$-bicomodule maps $s\tri d\to s'\tri d'$, and we obtain one from $\gamma$ as follows:
\[
\begin{tikzcd}
	c\tri s\tri d\ar[d]&
	s\tri d\tri d\ar[d]\ar[l]&
	s\tri d\ar[d]\ar[l]\ar[r]&
	s\tri d\tri d\ar[d]\\
	c'\tri s'\tri d'&
	s'\tri d'\tri d'\ar[l]&
	s'\tri d'\ar[l]\ar[r]&
	s'\tri d'\tri d'
\end{tikzcd}
\]
To see that this map is faithful for $d\neq 0$, suppose given maps $\gamma_1,\gamma_2\colon s\to s'$ which induce the same map $(\gamma_1\tri\beta)=(\gamma_2\tri\beta)\colon s\tri d\to s'\tri d'$. Then both squares below commute
\[
\begin{tikzcd}
	s\tri d\ar[r, "s\tri\epsilon"]\ar[d]&
	s\ar[d, shift right, "\gamma_1"']\ar[d, shift left, "\gamma_2"]\\
	s'\tri d'\ar[r, "s'\tri\epsilon"']&
	s'
\end{tikzcd}
\]
so it suffices to show that $s\tri\epsilon$ is an epimorphism. The operation $s\tri-$ preserves epimorphisms, and $p\to\yon$ is an epimorphism for any polynomial $p\neq 0$.

It follows from \cref{lemma.tri_d_compose_over_d} that horizontal composites and identities are preserved by our functor, e.g.\ we have natural isomorphisms
\[
\begin{tikzcd}[row sep=5pt]
	&d\ar[dr, bimr-biml, bend left=20pt, "t\tri e"]\\
	e\ar[ur, bimr-biml, bend left=20pt, "s\tri d"]\ar[rr, bimr-biml, bend right=10pt, "s\tri t\tri e"', "" name=comp]&\ar[u, equal]&
	c
\end{tikzcd}
\]

Finally, every bicomodule $c\bifrom[S]0$ gives rise to an effects handler, $c\tri S\from S=S\tri 0$. Similarly, every bicomodule $c\bifrom[s]\yon$ gives rise to a an effects handler $c\tri s\from s=s\tri\yon$. In both cases, the required commutativity \eqref{eqn.eff} follows from that of the bicomodules.
\end{proof}

\section{Polynomial coalgebras and the double category $\org$}

For a polynomial $p$, a $p$-coalgebra is a set $S$ of \emph{states} equipped with a function $f \colon S \to p \tri S$, which encodes an \emph{action} function $f_0 \colon S \to p(1)$ labeling each state with a position of $p$ and an \emph{update} function $f_s \colon p[f_0(s)] \to S$ indicating how each direction in $p[f_0(s)]$ transitions from $s \in S$ to a potentially new state in $S$. This can be regarded as a generalization of finite automata, where the polynomial $p$ encodes the set of labels a state can have and a set of outgoing transitions which depends on the label. 

\begin{definition}\label{pqcoalg}
The closure $[-,-]$ of the monoidal structure $(\yon,\otimes)$ on $\poly$ is given by
\[
[q,p] \coloneq \sum_{\phi \colon q \to p} \yon^{\sum\limits_{I \in q(1)} p[\phi_1I]}
\]
for polynomials $p,q$. A $[q,p]$-coalgebra is a set $S$ equipped with a function $S \to [q,p](S)$.
\end{definition}

In \cite{spivak2021learners,shapiro2022dynamic}, the authors describe a double category $\org$ whose vertical category is that of polynomials, whose horizontal morphisms from $q$ to $p$ are the $[q,p]$-coalgebras $S\to [q,p]\tri S$, and whose squares are maps $S \to S'$ satisfying a certain commutativity condition \cite[Section 2.4]{shapiro2022dynamic}. Monoidal categories and operads enriched in $\org$ can be used to model the process of training a deep learning system and running a prediction market.

It turns out that the category of $[q,p]$-coalgebras is equivalent to that of elementary $(p,q)$-effects handlers of the form $(S\yon,\varphi)$ for some $S:\smset$, i.e.\ whose carrier is linear,%
\footnote{Note that this is not a contravariant assignment, as a $(p,q)$-effects handler is regarded as a morphism from $q$ to $p$ in $\eff$, a convention inherited from $\ccatsharp$.} an assignment which furthermore extends to the entire structure of $\org$.

\begin{lemma}\label{lemma.dir_tri_lin_rep}
For any sets $S,T:\smset$ and polynomial $p:\poly$, the maps
\[
	S\yon\otimes p\To\cong S\yon\tri p
  \qqand
	p\otimes\yon^T\To\cong p\tri\yon^T
\]
induced by the duoidal structure \eqref{eqn.duoidal} are isomorphisms.
\end{lemma}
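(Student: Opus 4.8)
The plan is to compute both comparison maps explicitly from the definitions of $\tri$ and $\otimes$, observe that in each case the two polynomials have the \emph{same} carrier up to canonical isomorphism, and then confirm that the duoidal morphism of \eqref{eqn.duoidal} realizes exactly that canonical identification. Since $\yon$ is the unit for both $\tri$ and $\otimes$, the relevant instance of the first map in \eqref{eqn.duoidal} is obtained by setting its four arguments to $S\yon,\yon,\yon,p$: the duoidal morphism $(S\yon\tri\yon)\otimes(\yon\tri p)\to(S\yon\otimes\yon)\tri(\yon\otimes p)$ becomes, after the unit isomorphisms $S\yon\tri\yon\cong S\yon\cong S\yon\otimes\yon$ and $\yon\tri p\cong p\cong\yon\otimes p$, precisely the map $S\yon\otimes p\to S\yon\tri p$ in question. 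The second map arises the same way from the arguments $p,\yon,\yon,\yon^T$.

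First I would unwind the carriers. A position of $S\yon\otimes p$ is a pair $(s,I)\in S\times p(1)$ with direction set $(S\yon)[s]\times p[I]\cong p[I]$, so $S\yon\otimes p\cong\sum_{(s,I)\in S\times p(1)}\yon^{p[I]}$. On the other side, since $(S\yon)[s]\cong 1$, a position of $S\yon\tri p$ consists of $s\in S$ together with a map $1\to p(1)$, i.e. again a pair $(s,I)$, and its direction set is $\sum_{i\in 1}p[I]\cong p[I]$; hence $S\yon\tri p\cong\sum_{(s,I)}\yon^{p[I]}$ as well. The same bookkeeping, now using $(\yon^T)(1)\cong 1$ and $\sum_{i\in p[I]}(\yon^T)[-]=\sum_{i\in p[I]}T\cong p[I]\times T$, shows that both $p\otimes\yon^T$ and $p\tri\yon^T$ have carrier $\sum_{I\in p(1)}\yon^{p[I]\times T}$. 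As a sanity check on these identifications one may note that the two comparisons are isomorphisms of the associated polynomial functors: $S\yon\otimes p$ and $S\yon\tri p$ each represent $X\mapsto S\times p(X)$, while $p\otimes\yon^T$ and $p\tri\yon^T$ each represent $X\mapsto p(X^T)\cong\sum_I X^{p[I]\times T}$.

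Second I would verify that the duoidal comparison map is the carrier-identity, not merely some abstract isomorphism of the common carrier. This is a matter of tracing the explicit formula for \eqref{eqn.duoidal} on positions and directions: on positions it sends a pair of $\tri$-positions to the $\tri$-position whose indexing map is the evident product of the two component maps, and on directions it acts by the canonical reindexing of the coproduct $\sum_{i}q[J(i)]$. In the present special cases all of the auxiliary functions are forced --- the maps out of the singletons $(S\yon)[s]$, respectively the unique maps into the singleton $(\yon^T)(1)$, carry no choices --- so the comparison map reduces on both positions and directions to the identity bijection, and is therefore an isomorphism. The main obstacle is precisely this second step: the reasoning is routine, but one must set up the indexing carefully enough to see that the duoidal structure introduces no nontrivial automorphism of the shared carrier; once the singleton-valued data is recognized as forced, no further calculation is needed.
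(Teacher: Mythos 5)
Your proposal is correct and matches the paper's proof, which consists entirely of the remark that ``one checks that both maps are bijective on positions and directions''; your explicit computation of the carriers and the verification that the singleton-valued data forces the duoidal comparison to be the canonical identification is exactly that check, spelled out. The additional care you take in identifying the maps as unit-instantiations of the interchange law \eqref{eqn.duoidal} is a faithful (and welcome) elaboration rather than a different route.
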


\begin{proof}
One checks that both maps are bijective on positions and directions.
\end{proof}

\begin{lemma}\label{lemma.const_linear}
For any sets $S,T:\smset$ and polynomial $p$ there is a natural bijection between hom-sets
\[
\poly(S\yon,p\tri T\yon)
\cong
\smset(S,p\tri T)
.
\]
\end{lemma}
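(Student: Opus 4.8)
The plan is to reduce both sides to the same set of functions $\smset(S,p(T))$, where $p(T)=\sum_{I\in p(1)}\smset(p[I],T)$ denotes the value of the polynomial functor $p$ at the set $T$. First I would record the elementary representability fact that for \emph{any} polynomial $r$ there is a natural bijection $\poly(S\yon,r)\cong\smset(S,r(1))$. Indeed $S\yon=\sum_{s\in S}\yon$, and each copy of $\yon=\yon^1$ has a singleton direction set, so a morphism $S\yon\to r$ consists of a function $\phi_1\colon S\to r(1)$ together with, for each $s\in S$, a backward map $r[\phi_1 s]\to 1$ on directions; the latter is uniquely determined by the codomain being a singleton, hence carries no information. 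Equivalently this is the Yoneda-style identity $\poly(\yon,r)\cong r(1)$ applied coproduct-wise in the domain.

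Next I would apply this with $r\coloneq p\tri T\yon$ and compute the positions of the composite. Unwinding the defining formula for $\tri$ with $q=T\yon$ (so that $q(1)=T$ and every direction set $q[t]$ is a singleton), the positions of $p\tri T\yon$ are pairs $(I,J)$ with $I\in p(1)$ and $J\colon p[I]\to T$, and each such position has direction set $p[I]$. Thus $(p\tri T\yon)(1)=\sum_{I\in p(1)}\smset(p[I],T)=p(T)$. Identifying $p(T)$ with the set written $p\tri T$ on the right-hand side then yields $\poly(S\yon,p\tri T\yon)\cong\smset\big(S,(p\tri T\yon)(1)\big)=\smset(S,p\tri T)$, as required.

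Finally I would verify naturality. Naturality in $S$ is immediate, since after the reduction both sides are $\smset(-,p(T))$ with the evident precomposition action. Naturality in $T$ (and in $p$) follows because the identification $(p\tri T\yon)(1)\cong p(T)$ is manifestly functorial in the on-positions and on-directions data, while the representability bijection of the first step is natural in $r$. None of these steps is a genuine obstacle: the only point requiring care is the bookkeeping in the first step—confirming that the backward direction maps of a morphism out of $S\yon$ are forced, so that such a morphism is exactly its function on positions—after which the statement is a direct computation of $(p\tri T\yon)(1)$.
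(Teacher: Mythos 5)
Your proposal is correct and is essentially the paper's own argument: the representability fact you prove by hand, $\poly(S\yon,r)\cong\smset(S,r(1))$, is precisely the adjunction $(S\mapsto S\yon)\dashv(-\tri 1)$ that the paper's one-line proof invokes, and your computation $(p\tri T\yon)(1)=p(T)$ just makes explicit what the paper leaves implicit via associativity of $\tri$. The extra detail (forced backward maps, naturality checks) is fine but not a different route.
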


\begin{proof}
The functor $(S\mapsto S\yon):\smset\to\poly$ is left adjoint to $(-\tri  1) \colon\poly\to\smset$.
\end{proof}

\begin{lemma}\label{lemma.lin_rep_adj}
For a set $S:\smset$ and polynomials $p,q:\poly$, there is a natural bijection between hom-sets
\[
\poly(p,q\tri S\yon)
\cong
\poly(p\tri\yon^S,q)
.
\]
\end{lemma}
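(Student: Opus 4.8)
The plan is to unwind the composition formula for $\tri$ on both sides of the claimed bijection and observe that, once the two composites are written out explicitly, a morphism on each side is determined by exactly the same underlying data; the translation between the two packagings is then nothing more than the universal property of the binary product in $\smset$.

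First I would compute the two composite polynomials. Since $S\yon$ has position set $S$ with every direction set a singleton, the composition formula gives
\[
q\tri S\yon\cong\sum_{\substack{J\in q(1)\\ K\colon q[J]\to S}}\yon^{q[J]},
\]
so a position of $q\tri S\yon$ is a pair $(J,K)$ with $J\in q(1)$ and $K\colon q[J]\to S$, and its direction set is $q[J]$. Dually, $\yon^S$ has a single position whose direction set is $S$, so
\[
p\tri\yon^S\cong\sum_{I\in p(1)}\yon^{p[I]\times S};
\]
here the positions are just the positions of $p$, and the direction set at $I$ is $p[I]\times S$.

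Next I would spell out a morphism on each side and match them. A map $\phi\colon p\to q\tri S\yon$ assigns to each $I\in p(1)$ a position $\phi_1(I)=(J_I,K_I)$ — that is, an object $J_I\in q(1)$ together with a function $K_I\colon q[J_I]\to S$ — and a backward direction map $\phi^\#_I\colon q[J_I]\to p[I]$. A map $\psi\colon p\tri\yon^S\to q$ assigns to each $I\in p(1)$ a position $\psi_1(I)\in q(1)$ together with a backward direction map $\psi^\#_I\colon q[\psi_1(I)]\to p[I]\times S$. Identifying $J_I$ with $\psi_1(I)$, the remaining data correspond precisely: the pair consisting of $\phi^\#_I\colon q[J_I]\to p[I]$ and $K_I\colon q[J_I]\to S$ is, by the universal property of the product $p[I]\times S$ in $\smset$, the same thing as a single function $q[J_I]\to p[I]\times S$, namely $\psi^\#_I$. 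This assignment is manifestly invertible, yielding the bijection, and naturality in $p$ and $q$ is immediate since the correspondence is defined componentwise in $I$ and $p[I]\times S$ is functorial.

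I do not expect a genuine obstacle here; the only point requiring care is the bookkeeping around the contravariant direction maps and the verification that the position component $\phi_1$ really does split as the pair $(J_I,K_I)$ with $K_I$ valued in $S$. Alternatively, and more briefly, one could observe that $p\tri\yon^S$ is exactly the coclosure $\lens{p}{S\yon}$ — regarding $S\yon$ as a $(\yon,\yon)$-bicomodule and each $p[I]$ as a $(\yon,0)$-bicomodule, a short computation gives $S\yon\tri p[I]\cong p[I]\times S$ on directions — so that the statement becomes a direct instance of the coclosure adjunction $\lens{-}{S\yon}\dashv-\tri S\yon$ from \cref{coclosure}.
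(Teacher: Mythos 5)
Your proof is correct, but it takes a genuinely different route from the paper's. The paper proves this lemma purely formally: it observes that $S\yon$ is left adjoint to $\yon^S$ as polynomial functors, witnessed \emph{inside} $(\poly,\yon,\tri)$ by a unit $\yon\to\yon^S\tri S\yon$ and counit $S\yon\tri\yon^S\to\yon$ satisfying the triangle identities, and then obtains the bijection by the usual mate construction (apply $-\tri\yon^S$ and compose with the counit in one direction, apply $-\tri S\yon$ and precompose with the unit in the other; the round trips are identities by the triangle identities). You instead unwind the composition formula on both sides --- correctly computing $q\tri S\yon\cong\sum_{J,K\colon q[J]\to S}\yon^{q[J]}$ and $p\tri\yon^S\cong\sum_{I}\yon^{p[I]\times S}$ --- and match the morphism data directly via the universal property of the product $p[I]\times S$. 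Both arguments are sound. The paper's approach is shorter, makes naturality automatic (mates of an adjunction are natural), and exhibits the lemma as an instance of a completely general fact about adjunctions in a monoidal category; your computation is more elementary and shows concretely what the corresponding data are, at the cost of the (routine but real) componentwise naturality check you wave at. Your closing observation --- that $p\tri\yon^S\cong\lens{p}{S\yon}$ so the statement is an instance of the coclosure adjunction of \cref{coclosure} --- is also correct and is arguably the closest in spirit to the paper's proof, since that coclosure adjunction is itself an abstract adjunction argument; but note it outsources the content to the cited result rather than proving it.
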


\begin{proof}
For any $S$, the polynomial functor $S\yon$ is left adjoint to $\yon^S$, i.e.\ there is a unit $\yon\to\yon^S\tri S\yon$ and a counit $S\yon\tri\yon^S\to\yon$ satisfying the triangle equations. Given a map $p\to q\tri S\yon$ one applies $(-\tri\yon^S)$ to both sides and composes with the counit to obtain a map $p\tri\yon^S\to q$, and given a map of the latter form, one applies $(-\tri S\yon)$ to both sides and precomposes with the unit to obtain a map $p\to q\tri S\yon$. The round-trips are identities by the triangle equations.
\end{proof}

\begin{theorem}\label{thm.org}
There is a pseudo-double functor $\org\to\effel$ which is the identity on objects and vertical morphisms and fully faithful on the category of horizontal morphisms and squares, with essential image given by the linear elementary effects handlers.
\end{theorem}

\begin{proof}
The vertical categories of both $\org$ and $\effel$ are defined to be $\poly$. A horizontal morphism in $\org$ from $q$ to $p$ is a $[q,p]$-coalgebra; we want to show that these can be identified with linear elementary $(p,q)$-effects handlers. Define $\effellin(p,q)_S$ to be the category of linear elementary effects handlers with carrier $S\yon$ and define $x\coalg_S$ to be the category of $x$-coalgebras with carrier $S$. By \cref{lemma.dir_tri_lin_rep,lemma.const_linear,lemma.lin_rep_adj} and the adjunction $(-\otimes q)\dashv[q,-]$, we have the following isomorphisms, natural in $S$:
\begin{align*}
	[q,p]\coalg_S&\cong
	\poly(S,[q,p]\tri S)\\&\cong
	\poly(S\yon,[q,p]\tri S\yon)\\&\cong
	\poly(S\yon\tri\yon^S,[q,p])\\&\cong
	\poly(S\yon\otimes\yon^S,[q,p])\\&\cong
	\poly(S\yon\otimes q\otimes\yon^S,p)\\&\cong
	\poly((S\yon\otimes q)\tri\yon^S,p)\\&\cong
	\poly(S\yon\tri q,p\tri\yon^S)\\&\cong
	\effellin(p,q)_S
\end{align*}
It is straightforward to check that horizontal composition is preserved.

Squares in $\org$ of the form
\[
\begin{tikzcd}
	q\ar[r, slash, "S"]\ar[d, "\psi"']&
	p\ar[d, "\varphi"]\\
	q'\ar[r, slash, "S'"']&
	p'
\end{tikzcd}
\]
consist of maps $f\colon S\to S'$ such that the following diagram commutes:
\[
\begin{tikzcd}[column sep=45pt]
	S\ar[r, "\vartheta"]\ar[d, "f"']&
	{[q,p]\tri S}\ar[r, "{[q,\varphi]\tri S}"]&
	{[q,p']\tri S}\ar[d, "{[q,p']\tri f}"]\\
	S'\ar[r, "\vartheta'"']&
	{[q',p']\tri S'}\ar[r, "{[\psi,p']\tri S'}"']&
	{[q,p']\tri S'}	
\end{tikzcd}
\]
This condition is equivalent to that for squares in $\effel$, which demand that the following diagram commutes:
\[
\begin{tikzcd}
  S\yon\tri q\ar[r, "\vartheta"]\ar[d, "f\tri\psi"']&
  p\tri S\yon\ar[d, "f\tri \varphi"]\\
  S'\yon\tri q'\ar[r, "\vartheta'"']&
  p'\tri S'\yon
\end{tikzcd}
\]
Thus the squares agree, as do compositions of squares, completing the proof.
\end{proof}

We have now defined a string of locally fully faithful pseudo-double functors
\[
\org \to \effel \to \eff \to \ccatsharp
\]
which acts by $\cofree{} \colon \poly \to \poly$ on the vertical category and sends a coalgebra $S \to [q,p](S)$ to the $(\cofree p,\cofree q)$-effects handler $\cofree p \tri S\yon \from S\yon \tri \cofree q$ sending a state $s_0 \in S$ and a $q$-behavior tree $T$ to the behavior tree of $p$ obtained by running the coalgebra on $s_0$ and each state reached by the paths through $T$, labeled by the states reached along the way. 

This shows that $\ccatsharp$ is capable of modeling yet another of the major applications of polynomial functors; while it has until now been used primarily in the realm of categorical database theory, this shows that it also encodes the polynomial coalgebra formulation of discrete open dynamical systems.

\printbibliography

\end{document}